\long\def\symbolfootnote[#1]#2{\begingroup\def\thefootnote{\fnsymbol{footnote}}\footnote[#1]{#2}\endgroup}
\newtheorem{theorem}{Theorem}[section]
\newtheorem{lemma}[theorem]{Lemma}
\theoremstyle{remark}
\theoremstyle{definition}
\newtheorem{definition}[theorem]{Definition}
\theoremstyle{proposition}
\newtheorem{proposition}[theorem]{Proposition}
\newtheorem{conjecture}[theorem]{Conjecture}
\numberwithin{equation}{section}
\begin{document}
\author{Linfeng Zhou}
\title[]{The isoperimetric problem in the 2-dimensional Finsler space forms with $k=0$. \uppercase\expandafter{\romannumeral1}}
\date{}
\maketitle

\begin{abstract} 
In this paper, the isoperimetric problem in the 2-dimensional Finsler space form $(F_B, B^2(1))$ with $k=0$ by using the Busemann-Hausdorff area is investigated.  We prove that the circle centered the origin achieves the local maximum area of the isoperimetric problem.\\

\noindent\textbf{2000 Mathematics Subject Classification:}
53B40, 53C60, 58B20.\\
\\
\textbf{Keywords and Phrases:  Isoperimetric problem, Finsler space forms.}
\end{abstract}

\section{\textbf{Introduction}}

In Riemannian geometry, a Riemann space form usually defined as a simple-connected Riemannian manifold $(M^n,g)$ with constant sectional curvature $k$.  It is well-known that the Riemann space forms are isometric to either the sphere $S^n$, or the Euclidean space $R^n$, or the hyperbolic space $H^n$.  These Riemann space forms are homogenous, locally projective flat and give the good models in Riemannian geometry.   

In Finsler geometry, what is the meaning of a Finsler space form? One way is to define the Finsler manifold with constant flag curvature to be a Finsler space form. However, till now we do not have a complete classification theorem of the Finsler metric of constant flag curvature. Even to some special class: such as the $(\alpha, \beta)$ metrics, how to classify these metrics with constant flag curvature is an open problem. 

Here is another way to define a Finsler space form. 
In a local coordinate, if imposing a spherical symmetry on a Finsler metric $(\Omega, F)$, then $F$ can be expressed by 
\[F(x,y)=|y|\phi(|x|,\frac{\langle x,y\rangle}{|y|}).\]
When considering the spherically symmetric projectively flat Finsler metric with constant flag curvature, the following classification theorem (see \cite{Zh}, \cite{MZ} and \cite{L} )is proved :

 \begin{theorem}Up to a scaling, a spherically symmetric projectively flat Finsler metric $(\Omega, F)$ has a constant flag curvature $k$ if and only if 
\begin{enumerate}
\item[(1)] $k=1$, the metric is either the projective sphere model or Bryant metric \[F=\frac{|y|c_1(z_1)}{c_1(z_1)^2+\big(z_2+c_2(z_1)\big)^2}\] where
$z_1:=\sqrt{|x|^2-\frac{\langle x,y\rangle^2}{|y|^2}}, z_2:=\frac{\langle x,y\rangle}{|y|},
c_1(z_1):=\frac{\sqrt{2}}{2}\sqrt{2d_2+z_1^2+\sqrt{(2d_2+z_1^2)^2+4d_1^2}},$
$$c_2(z_1):=\pm\frac{\sqrt{2}}{2}\sqrt{-2d_2-z_1^2+\sqrt{(2d_2+z_1^2)^2+4d_1^2}},$$
$d_1$ and $d_2$ are positive real numbers, $\Omega=R^n$.

\item[(2)] $k=0$, the metric is either the Euclidean space $R^n$ or the Berwald's example where 
\[F=\frac{(\sqrt{|y|^2-(|x|^2|y|^2-\langle x,y\rangle^2)}+\langle x,y\rangle)^2}{(1-|x|^2)^2\sqrt{|y|^2-(|x|^2|y|^2-\langle x,y\rangle^2)}}, \Omega=B^n(1).\]

\item[(3)] $k=-1$, the metric is either the Klein metric (one model of the hyperbolic space)
\[F=\frac{\sqrt{|y|^2-(|x|^2|y|^2-\langle x,y\rangle^2)}}{1-|x|^2}, \Omega=B^n(1); \]
or the Funk metric of Randers type
\[F=\frac{\sqrt{|y|^2-(|x|^2|y|^2-\langle x,y\rangle^2)}+\langle x,y\rangle}{2(1-|x|^2)}, \Omega=B^n(1); \]
or $\Omega=B^n(\sqrt{2(d_2-d_1)})$ with Finsler metric
\[F=\frac{|y|c_1(z_1)}{c_1(z_1)^2-\big(z_2+c_2(z_1)\big)^2}\] where
$z_1:=\sqrt{|x|^2-\frac{\langle x,y\rangle^2}{|y|^2}}, z_2:=\frac{\langle x,y\rangle}{|y|},
c_1(z_1):=\frac{\sqrt{2}}{2}\sqrt{2d_2-z_1^2+\sqrt{(2d_2-z_1^2)^2-4d_1^2}},$
\[c_2(z_1):=\pm\frac{\sqrt{2}}{2}\sqrt{2d_2-z_1^2-\sqrt{(2d_2-z_1^2)^2-4d_1^2}},\] and $d_2>d_1$ are positive real numbers.
\end{enumerate}
\end{theorem}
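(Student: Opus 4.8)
The plan is to convert the two geometric hypotheses into differential equations for the single profile function $\phi$ and then integrate them. First I would invoke the classical Rapcs\'ak--Hamel criterion: on a domain of $\mathbb{R}^n$ a Finsler metric $F$ is projectively flat if and only if $F_{x^k y^l}y^k = F_{x^l}$. Substituting the spherically symmetric form $F = |y|\phi(r,s)$ with $r = |x|$, $s = \langle x,y\rangle/|y|$, and using $r_{x^k}=x^k/r$, $s_{x^k}=y^k/|y|$, this identity --- being linear in $F$ --- collapses to a single second-order \emph{linear} PDE for $\phi(r,s)$, the equation familiar from the spherically symmetric literature. Its characteristic curves are $r^2-s^2=\mathrm{const}$, which is precisely why the natural variables here are $z_1=\sqrt{r^2-s^2}$ and $z_2=s$ as in the statement: in these variables the general projectively flat profile is carried by a single free function of $z_1$.

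Next I would impose $K\equiv k$. For a projectively flat metric, Berwald's formula gives $K=\dfrac{P^{2}-P_{x^m}y^m}{F^{2}}$ with projective factor $P=\dfrac{F_{x^m}y^m}{2F}$, which for our ansatz equals $\dfrac{|y|\,(s\phi_r+r\phi_s)}{2r\phi}$. Writing out $P^{2}-P_{x^m}y^m=kF^{2}$ and using the projective-flatness PDE to eliminate the mixed second derivatives of $\phi$ reduces this to a single \emph{nonlinear} ODE, in the variable $z_1$, for that free function, with $k$ entering as a parameter. This reduction, and the ODE it produces, is the crux of the argument.

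Then comes the integration, case by case. For $k=0$ the ODE integrates by quadrature: one branch gives $\phi\equiv\mathrm{const}$, i.e. the Euclidean $\mathbb{R}^n$, and the other, after solving a quadratic, yields Berwald's example on $B^n(1)$. For $k=1$ and $k=-1$ the ODE is genuinely nonlinear; completing the square converts it into an algebraic relation whose solutions are precisely the nested-radical functions $c_1(z_1)$ and $c_2(z_1)$, with $d_1,d_2$ the two constants of integration. Degenerate choices of these constants collapse $c_1,c_2$ and recover the projective sphere model and the Klein metric --- the Funk metric being then the familiar Randers deformation of the Klein metric --- while generic constants produce the Bryant metric for $k=1$ and the third hyperbolic family for $k=-1$. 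In each case the sign $\pm$ in $c_2$ and the maximal domain $\Omega$ (namely $\mathbb{R}^n$, $B^n(1)$, or $B^n(\sqrt{2(d_2-d_1)})$) are pinned down by requiring $\phi>0$ together with strong convexity, i.e. positive-definiteness of $g_{ij}=\tfrac12[F^2]_{y^iy^j}$. The converse (``if'') direction I would dispatch by substituting each listed metric back into the projective-flatness PDE and Berwald's formula, a direct if lengthy verification.

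The step I expect to be the main obstacle is the third one: solving the nonlinear curvature ODE for $k=\pm1$ and, above all, organizing its solution branches so that every strongly convex solution arises exactly once and the correct maximal domain is attached to it. This bookkeeping is the technical substance of \cite{Zh}, \cite{MZ} and \cite{L}; once the linear reduction of the first step and the curvature identity of the second step are in hand, reproducing it is a careful but essentially routine exercise in ODEs and algebra.
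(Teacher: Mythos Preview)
The paper does not give its own proof of this theorem: it is quoted in the introduction with the parenthetical ``(see \cite{Zh}, \cite{MZ} and \cite{L})'' and is used only as background to motivate the definition of Finsler space forms. There is therefore no in-paper argument to compare your proposal against.

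That said, your outline is the standard route and matches the strategy of the cited references: reduce projective flatness of $F=|y|\phi(r,s)$ via the Hamel/Rapcs\'ak criterion to a PDE in $(r,s)$ whose characteristics are $r^2-s^2=\mathrm{const}$, use Berwald's formula $K=(P^2-P_{x^m}y^m)/F^2$ for projectively flat metrics to impose $K\equiv k$, and then integrate the resulting ODE in the variable $z_1$ case by case. Your identification of the hard step --- organizing the solution branches for $k=\pm1$ and attaching the correct maximal domains via positivity and strong convexity --- is accurate; that is exactly the content of \cite{Zh}, \cite{MZ}, \cite{L}, and your sketch does not attempt to redo it but rather defers to those papers, which is also what the present paper does.
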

In this theorem,  the Riemannian space forms are also included. Furthermore, the non-Riemannian metrics share the same geodesics of the Remannian space forms and have a nice symmetry. It leads us to introduce the following definition. 
\begin{definition}
The metrics in the above theorem are called the \textbf{Finsler space forms}. 
\end{definition}
Till now, the geometry of the Riemannian space  forms is well-known: such as the geodesics, the trigonometry, the volume of the geodesic ball, all kinds of comparison theorem etc. However, the geometry of the non-Riemannian Finsler space forms are not  fully comprehended. One aspect of the geometry in the 2-dimensional Riemannian space forms is the isoperimetric problem: maximize the area of a simple closed curve under the constraint that the length of the curve to be fixed. 

For 2-dimensional Euclidean space, this problem is quite famous and old \cite{Bl}. It can be summarized as the following isoperimetric inequality 
\[L^2\geq 4\pi A,\]
where $L$ is the length of the closed curve and $A$ is the area enclosed by the curve. The inequality holds if and only if the curve is a circle in the plane. Although the solution to the isoperimetric problem was known in Greece. But the first rigorous proof was obtained only in the 19th century. 
In 1838, Steiner \cite{St} gave an elegant and simple proof of a condition necessary for a solution but did not give a sufficiency proof. The first complete proof was given by Weierstrass via the calculus of variations (see \cite{Bl}). In 1902, Hurwitz \cite{Hu} published a short proof using the Fourier series. An elegant direct proof was given by E. Schmidt \cite{Sc} in 1938 using the Cauchy-Schwarz inequality.  
     
As to the 2-dimensional sphere and the hyperbolic plane, the isoperimetric problem is given by 
\[L^2\geq 4\pi A-kA^2,\]
where $L$ is the length of the closed curve, $A$ is the area enclosed by the curve  and $k$ is the curvature of the space (see \cite{Ch}). The equality holds if and only if the curve is a geodesic circle. Above isoperimetric inequality can be generalized to the higher dimensional Riemannian space forms.

In Finslerian case, H. Busemann \cite{Bu} gave the solution of the isoperimetric problem in Minkowski space and proved the inequality
\[\int_{D}\sigma(u)dS\geq n V^{(n-1)/n}(D)V^{1/n}(K)\]
where $K$ is the polar reciprocal (with respect to the unit sphere) to the boundary of the convex closure of the surface $C: \sigma^{-1}(u)u$; and the equality sign holds only 
for $D$ homothetic to $K$.

The purpose of this paper is to consider the isoperimetric problem in the 2-dimensional Finsler space form $(F_B, B^2(1))$ with $k=0$ by using the Busemann-Hausdorff area.  The author employs the method of the calculus of variations and proves that \emph {the circle centered the origin archives the local maximum area, which is called a proper maximum, of the isoperimetric problem.} Actually, we conjecture that the circle centered the origin is just the solution of the isoperimetric problem in $(F_B, B^2(1))$.

Section 2 is a quick introduction of the theory of the calculus of variations which the author will use in the procceding sections. In section 3, a formula of the Busemann-Hausdorff area of $(F_B, B^2(1))$ is given. Section 4 discusses the Euler-Lagrange equation of the isoperimetric problems and obtains that the circle centered the origin is isoperimetric extremal. In section 5 and 6, it is proved that the Weierstrass function is negative, and there is no conjugate point along the circle centered the origin. The final section gives a complete proof of the main result based on the previous sections.

\section{\textbf{A brief review of the sufficiency theorem for isoperimetric problem in the calculus of variations}}
 
In the classical calculus of variations, the isoperimetric problem to be considered is that of minimizing an integral 
$$I=\int_{a}^{b}f(t,x(t),\dot{x}(t))dt=\int_{a}^{b}f(t,x_1,\dots,x_n,\dot{x}_1,\dots,\dot{x}_n)dt$$
in a class of admissible arcs
$$x_i(t), a\leq t\leq b; i=1,\dots,n,$$
joining two fixed points and satisfying a set of isoperimetric conditions 
$$I_{\alpha}=\int_{a}^{b}f_{\alpha}(t,x(t),\dot{x}(t))dt=l_{\alpha}, \alpha=1,\dots,m,$$
where the $l's$ are constants. 
It is assumed that the functions $f, f_{\alpha}$ are defined and have continuous derivatives of the first three orders in a region $R$ of points $(t,x,\dot{x})$. The points of $R$ will be called admissible. A continuous arc that can be divided into a finite number of suburbs on each of which it has continuous derivatives will be called admissible if its elements $(t,x,\dot{x})$ are all admissible. 

Before stating the sufficiency theorem proved in \cite{He}, let us recall some concepts.

An admissible arc $x_0$ is called a strong minimum of above isoperimetric problem  if there is a neighborhood $\mathcal{F}$ of $x_0$ in $tx$-space such that the inequality $I(x)\geq I(x_0)$ holds for all admissible arcs $x\neq x_0$ whose elements $(t,x(t))$ are in $\mathcal {F}$. If the inequality can be replaced by a strict inequality, the minimum is said to be a proper strong minimum.

The sufficiency conditions are given in terms of an integral $J(x)$ of the form
$$J(x)=I+\sum_{\alpha=1}^{m}\lambda_{\alpha}I_{\alpha}=\int_{a}^{b}F(t,x(t),\dot{x}(t),\lambda)dt,$$
where $F=f+\sum_{\alpha=1}^{m}\lambda_{\alpha}f_{\alpha}$ and the $\lambda$'s are constant multipliers.  An admissible arc and a set of constants $\lambda_{\alpha}$ having continuous second derivatives will be said to form an isoperimetric extremal if they satisfy the Euler-Lagrange equations 
\begin{equation}
F_{x_i}-\frac{dF_{\dot{x}_i}}{dt}=0.
\end{equation}

Let an admissible $x_0$ be an isoperimetric extremal joining the two points and satisfying the condition (2). The arc $x_0$ is said to be normal, if the equation $P_{i\alpha}a_{\alpha}=0$ where 
$$P_{i\alpha}=(f_{\alpha})_{ x_i}-\frac{d (f_{\alpha})_{\dot{x}_i}}{dt},$$
hold along $x_0$ only in case the constants $a_{\alpha}$ are all zero. 

The Weierstrass E-function for $F$ is given by
$$E(t,x,\dot{x},u):=F(t,x,u)-F(t,x,\dot{x})-\sum_{i=1}^{n}(u_i-\dot{x}_i)F_{\dot{x}_i}(t,x,\dot{x}).$$
An arc $x_0$ is said to satisfy the strict Weierstrass condition if for each $(t,x,\dot{x})$ in a neighborhood of $x_0$, the Weierstrass function
$$E(t,x,\dot{x},u)>0$$ 
holds for every admissible set $(t,x,u)\neq (t,x,\dot{x})$.

Finally the second variation of $J^{\prime\prime}(x_0,y)$ of $J$ along $x_0$ is of the form 
$$J^{\prime\prime}(x_0,y)=\int_a^b2\omega(t,y(t),\dot{y}(t))dt,$$
where $2\omega=\sum_{i,j}F_{x_ix_j}y_iy_j+2F_{x_i\dot{x}_j}y_i\dot{y}_j+F_{\dot{x}_i\dot{x}_j}\dot{y}_i\dot{y}_j$.

A sufficiency theorem for a strong minimum of the isoperimetric problem, which is proved by Hestenes in \cite{He}, can be stated as the following:
 \begin{theorem}
 Let $x_0$ be an admissible arc. Suppose there exist $\lambda_1,\dots,\lambda_m$ such that, relative to the function
$$J(x)=I+\sum_{\alpha=1}^{m}\lambda_{\alpha}I_{\alpha}=\int_{a}^{b}F(t,x(t),\dot{x}(t),\lambda)dt,$$
\begin{enumerate}
\item $x_0$ is isoperimetric extremal,
\item $x_0$ is normal,
\item $x_0$ satisfies the strict Weierstrass condition,
\item $J^{\prime\prime}(x_0,y)>0$ for every non-null admissible variations $y_i(t), (a\leq t\leq b)$, vanishing at $t=a$ and $t=b$ and satisfying with $x_0$ the equations 
         $$\int_{a}^b\big( (f_{\alpha})_{ x_i}y_i+(f_{\alpha})_{ \dot{x}_i}\dot{y}_i \big)dt=0,$$
\item along $x_0$ the inequality $\sum_{i,j}F_{\dot{x}_i\dot{x}_j}y_iy_j>0$ holds for every set $(y)\neq (0).$        
\end{enumerate}
Then $x_0$ is a proper strong minimum of the isoperimetric problem. 
 \end{theorem}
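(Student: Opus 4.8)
The plan is to run the classical Weierstrass field-theory scheme, modified to accommodate the isoperimetric constraints. First I would use the multiplier rule to pass to $J$: if $x$ is an admissible comparison arc lying in a small neighborhood of $x_0$, sharing its endpoints, and satisfying $I_\alpha(x)=l_\alpha$ for every $\alpha$, then $J(x)-J(x_0)=I(x)-I(x_0)$, so it suffices to prove $J(x)>J(x_0)$ for every such $x\neq x_0$. Here normality (condition (2)) is essential: it forces the multipliers $\lambda_\alpha$ to be uniquely determined and, more to the point, makes the first variations of the $I_\alpha$ along $x_0$ linearly independent, which is what will allow the comparison class and the field to be matched up.

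The core step is to embed $x_0$ in a field of extremals. Because of the constraints one cannot expect a field of extremals of $J$ with the multipliers held fixed; instead I would construct a family of solutions of the Euler--Lagrange equations in which the multipliers $\lambda_\alpha$ are themselves allowed to vary, chosen so that the family simply covers a full neighborhood $\mathcal F$ of $x_0$ in $tx$-space and reduces to $x_0$ for the central parameter value. The existence of such a field is where conditions (4) and (5) enter: the strengthened Legendre condition (5) makes the Euler--Lagrange system non-degenerate, while the positivity of the second variation $J''(x_0,y)$ on the variations satisfying the accessory isoperimetric conditions $\int_a^b\big((f_\alpha)_{x_i}y_i+(f_\alpha)_{\dot x_i}\dot y_i\big)\,dt=0$ is equivalent, via the theory of the accessory (Jacobi) isoperimetric problem, to the non-vanishing on all of $[a,b]$ of the relevant determinant of Jacobi solutions; that non-vanishing is exactly what permits one to solve for the field and to define on $\mathcal F$ a smooth slope function $p(t,x)$ together with the accompanying multiplier functions.

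With the field in hand, I would introduce the Hilbert invariant integral appropriate to the isoperimetric problem: the line integral over $\mathcal F$ whose integrand is the usual $F(t,x,p)+(\dot x_i-p_i)F_{\dot x_i}(t,x,p)$ augmented by the terms arising from the variable multipliers contracted against $dI_\alpha$. Its defining property is that along any admissible arc its value depends only on the endpoints and on the numbers $I_\alpha$ of that arc; hence it agrees on any two arcs with common endpoints and common $I_\alpha$-values. Since $x$ and $x_0$ have the same endpoints and both have $I_\alpha$-values equal to $l_\alpha$, the invariant integral takes the same value on $x$ and on $x_0$, and on $x_0$ (a field extremal) it equals $J(x_0)$. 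The Weierstrass representation then gives
$$J(x)-J(x_0)=\int_a^b E\big(t,x(t),p(t,x(t)),\dot x(t)\big)\,dt,$$
and the strict Weierstrass condition (3), valid throughout $\mathcal F$, makes the integrand positive wherever $\dot x\neq p$; the only arc in $\mathcal F$ with the prescribed endpoints along which $\dot x\equiv p$ is $x_0$ itself. Therefore $J(x)>J(x_0)$ for $x\neq x_0$, and with the reduction of the first paragraph this yields $I(x)>I(x_0)$, i.e. $x_0$ is a proper strong minimum.

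I expect the main obstacle to be the field construction: converting the abstract hypothesis (4) — positivity of $J''(x_0,y)$ on the linear subspace cut out by the accessory isoperimetric conditions — into the concrete statement that a Jacobi-type determinant is nonzero on the whole interval $[a,b]$, and then using this to produce a multi-parameter field that both covers a neighborhood simply and carries the multiplier functions required by the invariant integral. The bookkeeping of the extra $m$ parameters and multiplier functions, and the verification that the Hilbert integral is invariant in their presence (up to the $dI_\alpha$-terms that vanish on the constrained comparison class), is the technically delicate part; the Weierstrass-excess estimate and the multiplier reduction are then comparatively routine.
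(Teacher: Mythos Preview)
The paper does not give its own proof of this theorem: it is stated in Section~2 as a background result and is explicitly attributed to Hestenes \cite{He}, with no argument supplied. So there is no ``paper's proof'' to compare against beyond the citation.

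Your outline is the classical field-theoretic scheme that Hestenes' 1938 note is built on, and the overall architecture---reduce via the multiplier rule to $J(x)>J(x_0)$, embed $x_0$ in a field of extremals with variable multipliers using (2), (4), (5) to guarantee the Jacobi determinant is nonvanishing, build the Hilbert invariant integral adapted to the constraints, and conclude by the Weierstrass excess formula using (3)---is correct in spirit and matches what one finds in Hestenes and in Bolza's treatment. You have also correctly flagged the genuine technical point: the passage from the abstract positivity hypothesis (4) on the constrained variations to the concrete nonvanishing of the appropriate conjugate-point determinant, and the construction of the $m$-parameter field carrying multiplier functions so that the Hilbert integral is path-independent on the constrained class. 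That step is where the real work lies, and your sketch is honest about leaving it as the main obstacle rather than claiming it is routine. As a plan this is sound; as a proof it would need that field-construction lemma written out, which is precisely what Hestenes supplies.
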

 
 A point $t=c$ on $a<t\leq b$ will be said to define a conjugate point to $t=a$ on $x_0$ if there is a solution $y_i(t)$ of the Jacobi equations
 $$\omega_{y_i}-\frac{d\omega_{\dot{y}_i}}{dt}=0$$
such that 
$$\int_{a}^b\big( (f_{\alpha})_{ x_i}y_i+(f_{\alpha})_{ \dot{x}_i}\dot{y}_i \big)dt=0.$$
Furthermore, the solution $y(t)$  satisfies $y(a)=y(c)=0$ and $y(t)\not\equiv 0$ on $a<t<c$.

\begin{theorem}
The inequality $J^{\prime\prime}(x_0,y)>0$ holds for all $y\neq 0$ if and only if there is no point $c$ conjugate to $a$ on $0<t\neq b$.
\end{theorem}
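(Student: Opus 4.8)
This is the Jacobi (conjugate-point) condition for the accessory isoperimetric problem attached to $x_0$, and the plan is to carry over the two classical arguments behind the Jacobi condition of ordinary variational problems (cf.\ \cite{He}). The accessory problem concerns the quadratic functional $J''(x_0,y)=\int_a^b2\omega(t,y,\dot y)\,dt$ on the linear class of variations $y_i(t)$ with $y(a)=y(b)=0$ subject to the linearized isoperimetric relations $\int_a^b\big((f_\alpha)_{x_i}y_i+(f_\alpha)_{\dot x_i}\dot y_i\big)\,dt=0$; its extremals are exactly the solutions of the Jacobi equations $\omega_{y_i}-\frac{d}{dt}\omega_{\dot y_i}=0$ meeting these relations, and $c$ is conjugate to $a$ precisely when such an extremal $y^*\not\equiv0$ vanishes at $a$ and at $c$, as in the definition above. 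Throughout I assume, as holds in the present setting, the strengthened Legendre--Clebsch inequality $\sum_{i,j}F_{\dot x_i\dot x_j}(t)\,\xi_i\xi_j>0$ along $x_0$ (condition (5) of the sufficiency theorem above); without it the equivalence degenerates.

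\emph{No conjugate point on $a<t\le b$ $\Longrightarrow$ $J''(x_0,y)>0$ for $y\ne0$.} Here one uses field theory. The absence of conjugate points on the closed subinterval permits the construction of a conjugate base of accessory extremals $y^{(1)},\dots,y^{(n)}$ (with their attached constants) whose coordinate matrix $\big(y^{(k)}_i(t)\big)$ is nonsingular throughout $[a,b]$ --- shifting the initial point slightly to the left of $a$ if necessary, which is legitimate since the absence of conjugate points is stable under small endpoint changes. This base yields the Clebsch (Legendre) transformation
$$2\omega(t,y,\dot y)=\sum_{i,j}F_{\dot x_i\dot x_j}\big(\dot y_i-p_i(t,y)\big)\big(\dot y_j-p_j(t,y)\big)+\frac{d}{dt}\,W(t,y),$$
where $p_i$ is linear in $y$ (built from the logarithmic matrix derivative of the base) and $W(t,y)$ is a quadratic form in $y$. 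Integrating over $[a,b]$ annihilates the exact term because $y(a)=y(b)=0$, leaving $J''(x_0,y)=\int_a^b\sum_{i,j}F_{\dot x_i\dot x_j}(\dot y_i-p_i)(\dot y_j-p_j)\,dt\ge0$ by Legendre--Clebsch, with equality only if $\dot y_i\equiv p_i(t,y)$, i.e.\ $y$ solves the Jacobi equations; a Jacobi solution vanishing at both $a$ and $b$ must vanish identically, else $b$ would be conjugate to $a$. Hence $J''(x_0,y)>0$ whenever $y\ne0$.

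\emph{A conjugate point $c$ with $a<c\le b$ $\Longrightarrow$ $J''(x_0,\cdot)$ is not positive on all $y\ne0$.} Let $y^*$ be the associated Jacobi field: $y^*(a)=y^*(c)=0$, $y^*\not\equiv0$ on $(a,c)$, and the linearized relations hold on $[a,c]$. Extending $y^*$ by $0$ on $[c,b]$ gives a broken admissible variation $\bar y$, and integrating $2\omega(t,\bar y,\dot{\bar y})$ by parts --- using the Jacobi equation on $[a,c]$, the triviality of $\bar y$ on $[c,b]$, and the vanishing of $\bar y$ at $a,c,b$ --- gives $J''(x_0,\bar y)=0$. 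If $c=b$ this already shows that $J''$ is not positive on $y\ne0$, since $\bar y=y^*\ne0$. If $a<c<b$, then $\dot y^*(c)\ne0$ (a solution of the second-order Jacobi system vanishing together with its derivative at $c$ would be identically zero), so $\bar y$ has a genuine corner at $c$ and is not an accessory extremal. Perturbing by a small admissible variation $\eta$ (with $\eta(a)=\eta(b)=0$ and the linearized relations), one has $J''(x_0,\bar y+s\eta)=2sB+s^2J''(x_0,\eta)$, where $B=\sum_{i,j}F_{\dot x_i\dot x_j}(c)\,\dot y^*_j(c)\,\eta_i(c)$ arises as the boundary term produced by the corner; since $F_{\dot x_i\dot x_j}(c)$ is positive definite and $\dot y^*(c)\ne0$, $B\ne0$ for a suitable choice of $\eta(c)$, and then $J''(x_0,\bar y+s\eta)<0$ for small $s$ of the right sign. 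In either case strict positivity fails.

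The substantive difficulty is not in either classical mechanism by itself but in keeping the $m$ isoperimetric side conditions respected throughout: one must check that the conjugate base, the truncated field $\bar y$, and the perturbation $\eta$ can all be chosen to satisfy the linearized integral relations, and must carry along the constant multipliers that accompany the Jacobi system of an isoperimetric (as opposed to ordinary) problem. This is exactly where the normality of $x_0$ enters --- ensuring the linearized relations are independent and can be met by admissible perturbations (so, in particular, that $\eta(c)$ above may be prescribed freely) --- and it is also why the borderline case $c=b$ has to be isolated. With this bookkeeping in place, the argument reduces to the Jacobi theory of \cite{He} applied to the accessory problem.
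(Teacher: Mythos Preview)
The paper does not actually prove this theorem. Section~2 is explicitly a ``brief review'' of classical results, and this statement is recorded there, alongside Hestenes' sufficiency theorem, as a standard fact from the calculus of variations; the paper simply cites \cite{He} and moves on. So there is no ``paper's own proof'' to compare against.

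Your sketch is a faithful outline of the classical Jacobi argument as it is carried out in Hestenes and in Bolza: the Clebsch transformation (field embedding) for the sufficiency direction, and the broken-extremal/corner argument for the necessity direction, with the isoperimetric side conditions handled via normality. That is exactly the machinery the paper is invoking by citation. As a sketch it is accurate in its broad strokes; if you wanted to turn it into a full proof you would need to be more careful on two points. First, in the necessity direction the boundary term you write as $B=\sum_{i,j}F_{\dot x_i\dot x_j}(c)\,\dot y^*_j(c)\,\eta_i(c)$ is not quite what falls out of the bilinear form for the \emph{isoperimetric} accessory problem: the integration by parts also produces contributions from the accessory multipliers attached to $y^*$, and one has to argue that a suitable $\eta$ can be chosen making the full boundary expression nonzero while still satisfying the $m$ linearized integral relations on $[a,b]$ (this is where normality is genuinely used, and it is a bit more work than just prescribing $\eta(c)$). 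Second, in the sufficiency direction the existence of a nonsingular conjugate base compatible with the isoperimetric constraints is itself a nontrivial lemma in the constrained setting; Hestenes proves it, but it does not follow immediately from the unconstrained version. These are bookkeeping issues you flag yourself, so the sketch is honest about its gaps; it matches the intended literature argument.
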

  
For our purpose, we shall concern the calculus of variations in parametric form in 2-dimensional Euclidean space.  Let an arc be of the parametric form
  $$x(t):= (x_1(t), x_2(t))  \quad (a\leq t\leq b)$$
in $R^2$, where the functions $x_i(t)$ are assumed to be continuous first and second derivatives $\dot{x}_i(t)$ for $i=1, 2$ and if more over $\dot{x}_1^2+\dot{x}_2^2\neq 0$ in $(a,b)$. 

An integral 
$$\int_a^b f(t,x(t),\dot{x}(t))dt$$
can be shown to be independent of the parametrization of the arc $x$ if and only if the integrand $f$ is independent of $t$ and is positively homogeneous in $\dot{x}$ of degree one. 

The isoperimetric problem in parametric form can be stated as the following type.
Among all admissible curves joining two given points for which the integral
$$K=\int_a^b g(x_1,x_2,\dot{x}_1,\dot{x}_2)dt$$
takes a given value $l$, how to determine the one which minimizes the integral 
  $$I=\int_a^b f(x(t),\dot{x}(t))dt=\int_a^b f(x_1,x_2,\dot{x}_1,\dot{x}_2)dt.$$ 
The two function $f$ and $g$ are positively homogeneous in $\dot{x}$ of degree one and have continuous derivatives of the first three orders.

Let 
$$J(x)=I+\lambda K=\int_a^b H(x_1,x_2,\dot{x}_1,\dot{x}_2,\lambda)dt$$
where $H=f+\lambda g$. With a slight difference, the sufficient theorem proved by Hestenes can be illustrated by the following.
\begin{theorem} Let $x_0$ be an admissible arc. Suppose there exist $\lambda_0$ such that, relative to the function
$$J(x)=\int_{a}^{b}H(x_1,x_2,\dot{x}_1,\dot{x}_2,\lambda)dt,$$
\begin{enumerate}
\item $x_0$ is isoperimetric extremal, i.e.
  $$H_{x_i}-\frac{dH_{\dot{x}_i}}{dt}=0$$
  for $i=1,2$,
\item $x_0$ is normal which means the equation $P_{i}\neq 0$ where 
$$P_{i}=g_{ x_i}-\frac{d g_{\dot{x}_i}}{dt},$$
hold along $x_0$ for $i=1,2$,

\item  the Weierstrass function
$$E(x,\dot{x},u)>0$$ 
holds for every admissible set $(x,u)$ with  $u\neq k\dot{x} (k>0)$,

\item $J^{\prime\prime}(x_0,y)>0$ for every admissible variations $y_i(t)\neq \rho(t)\dot{x}_{0i}(t), (a\leq t\leq b)$, vanishing at $t=a$ and $t=b$ and satisfying with $x_0$ the equations 
         $$\int_{a}^b\big( g_{ x_i}y_i+g_{ \dot{x}_i}\dot{y}_i \big)dt=0\quad (i=1,2),$$
\item along $x_0$ the inequality $\sum_{i,j=1}^2H_{\dot{x}_i\dot{x}_j}y_iy_j>0$ holds for all $y\neq k\dot{x}_0(t).$        
\end{enumerate}
Then $x_0$ is a proper strong minimum of the isoperimetric problem. 
\end{theorem}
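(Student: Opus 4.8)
The plan is to deduce this parametric sufficiency theorem from the non-parametric sufficiency theorem of Hestenes recalled above (Theorem 2.1), applied with a single dependent variable, by localizing $x_0$ into subarcs on which competing curves are graphs and using the homogeneity of $f$ and $g$ to dispose of the degenerate directions. Since $f$ and $g$ are positively homogeneous of degree one in $\dot{x}$, the integrals $I$ and $K$ are invariant under orientation-preserving reparametrizations; moreover $\sum_j\dot{x}_jH_{\dot{x}_i\dot{x}_j}=0$ (the derivative of Euler's relation $\sum_j\dot{x}_jH_{\dot{x}_j}=H$), so the Hessian $(H_{\dot{x}_i\dot{x}_j})$ is everywhere singular with $\dot{x}$ in its kernel, and the directions excluded in (3)--(5), namely $u=k\dot{x}$ and $y=k\dot{x}_0$ or $y=\rho(t)\dot{x}_0$, are exactly the finite and infinitesimal reparametrizations, along which no positivity can hold and, by parameter-invariance, none is needed. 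In two dimensions one has the classical factorization $H_{\dot{x}_1\dot{x}_1}=H_1\dot{x}_2^{\,2}$, $H_{\dot{x}_1\dot{x}_2}=-H_1\dot{x}_1\dot{x}_2$, $H_{\dot{x}_2\dot{x}_2}=H_1\dot{x}_1^{\,2}$ for a scalar $H_1(x,\dot{x})$, so that $\sum_{i,j}H_{\dot{x}_i\dot{x}_j}y_iy_j=H_1(\dot{x}_2y_1-\dot{x}_1y_2)^2$ and hypothesis (5) is equivalent to $H_1>0$ along $x_0$.

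Because $\dot{x}_0(t)\neq 0$ on $[a,b]$, a neighbourhood of each parameter value carries a subarc on which one coordinate, say $x_1$, is strictly monotone; there the curve is a graph $x_2=\phi(x_1)$, and by homogeneity we may normalize $\dot{x}_1=1$ and pass to the reduced problem with integrands $\tilde f(x_1,x_2,p):=f(x_1,x_2,1,p)$, $\tilde g:=g(x_1,x_2,1,p)$ and $\tilde H:=\tilde f+\lambda\tilde g$. I would then check, one at a time, that the five hypotheses of Theorem 2.1 hold for $\tilde H$ with $x_1$ as independent variable and $x_2$ as the single dependent variable: the reduced Euler--Lagrange equation is equivalent along $x_0$ to the parametric system (1), by the Weierstrass correspondence for parametric problems; non-parametric normality of $\tilde g$ is exactly the condition $P_i\neq 0$ of (2); the reduced Weierstrass function coincides with $E(x,\dot{x},u)$ after the normalization $\dot{x}_1=u_1=1$, and the exclusion $u\neq k\dot{x}$ becomes $q\neq p$ for the graph slopes, so the strict Weierstrass inequality of (3) transfers; an admissible variation $y$ with $y\neq\rho(t)\dot{x}_0$ splits into a reparametrization part, which drops out of $J''$, plus a transversal part representable as a graph-variation $\eta(x_1)$, so the isoperimetric side-condition and the positivity of $J''$ in (4) descend to $\eta$; and the strengthened Legendre condition $\tilde H_{pp}>0$ is precisely $H_1>0$, i.e. hypothesis (5).

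Once the hypotheses are verified, Theorem 2.1 yields that on each such subarc $\phi$ is a proper strong minimum of the reduced isoperimetric problem, i.e. there is a planar neighbourhood of the subarc in which every admissible competitor with the prescribed value of $\tilde K$ has strictly larger $\tilde I$ unless it coincides with the subarc. Finitely many overlapping monotone subarcs cover $x_0$; patching the corresponding neighbourhoods produces a neighbourhood $\mathcal F$ of $x_0$ in $x_1x_2$-space, chosen thin enough that any admissible curve lying in $\mathcal F$ breaks into finitely many pieces, each a graph over the appropriate coordinate. Summing the reduced inequalities over the pieces and invoking the parameter-invariance of $I$ and $K$ gives $I(x)\geq I(x_0)$ for every admissible $x$ in $\mathcal F$ with $K(x)=l$, with equality only when $x$ is an orientation-preserving reparametrization of $x_0$; this is exactly the asserted proper strong minimum property, understood --- as the form of conditions (3)--(5) already signals --- modulo reparametrization.

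I expect the main obstacle to be this last patching step: ensuring that $\mathcal F$ is genuinely a neighbourhood of the \emph{entire} curve $x_0$ and that an arbitrary competitor inside it truly decomposes into graph-pieces compatible with the chosen subdivision of $x_0$, so that the local minimality inequalities can be added coherently and strictness is not lost at the overlaps, where a competitor may be a graph over $x_1$ on one piece and over $x_2$ on the next. This gluing is carried out exactly as in Hestenes's treatment of parametric problems in \cite{He}; the remaining items are the routine homogeneity computations indicated above.
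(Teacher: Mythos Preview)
The paper does not prove this theorem at all; Section~2 is an expository review, and the parametric statement is simply announced as the parametric counterpart of Hestenes's result (``With a slight difference, the sufficient theorem proved by Hestenes can be illustrated by the following''), with the reader referred to \cite{He} and \cite{Bo}. So there is no argument in the paper to compare against.

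That said, your reduction has a genuine gap precisely at the patching step you single out, and it cannot be repaired in the way you indicate. Theorem~2.1 is a \emph{fixed-endpoint} sufficiency theorem with a \emph{global} isoperimetric constraint. If you cut $x_0$ into graph-pieces $P_1,\dots,P_N$ and a nearby competitor $x$ into pieces $Q_1,\dots,Q_N$, then to invoke Theorem~2.1 on piece $i$ you would need (a) $Q_i$ to share the endpoints of $P_i$ and (b) $\int_{Q_i}\tilde g=\int_{P_i}\tilde g$. Neither is forced: a competitor in $\mathcal F$ need not pass through your subdivision points of $x_0$, and the constraint $K(x)=l$ only gives $\sum_i\int_{Q_i}\tilde g=\sum_i\int_{P_i}\tilde g$, not equality piece by piece. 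Hence the local inequalities $I(Q_i)\ge I(P_i)$ are simply unavailable, and there is nothing to sum. (Absorbing the constraint into $J=I+\lambda K$ does not help: you are then minimizing $J$ with free side-condition but still with fixed endpoints, and the endpoint mismatch persists.)

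Your appeal to \cite{He} for the gluing is also misplaced: that 1938 note treats only the non-parametric problem and contains no graph-localization or patching of the kind you describe. The classical route to parametric sufficiency---in Bolza \cite{Bo} and in later work of Hestenes on Bolza/Lagrange problems---proceeds instead through a field of extremals and the Weierstrass/Hilbert integral formula, which compares $J$ on the whole competitor to $J$ on $x_0$ in one stroke and uses the multiplier to absorb the constraint globally; no decomposition into graph-pieces is involved.
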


In the Bolza's book \cite{Bo}, the Jacobi equation along an isoperimetric extremal admissible curve $x_0$ turns out to be  
\[\Psi(y)+\mu U=0\]
where $\Psi(y)=H_2y-\frac{d}{dt}(H_1\dot{y})$,   $H_1=\frac{H_{\dot{x}_1\dot{x}_1}}{\dot{x}_2^2}$, 
$H_2=\frac{H_{x_1x_1}-\ddot{x}_2^2H_1-\frac{d L}{dt}}{\dot{x}_2^2}$, $L=H_{x_1\dot{x}_1}-\dot{x}_2\ddot{x}_2H_1$, $U=g_{x_1\dot{x}_2}-g_{\dot{x}_1x_2}+g_1(\dot{x}_1\ddot{x}_2-\ddot{x}_1\dot{x}_2)$, $g_1=\frac{g_{\dot{x}_1\dot{x}_1}}{\dot{x}_2^2}$. Moreover, the solution $y(t)$ shall satisfy the condition 
\[\int_a^b Uy dt=0.\]
 A conjugate point  $t=c$ to the point $t=a$ on $x_0$ means there is a solution $y(t)$ of the above Jacobi equation with the integral condition satisfying $y(a)=y(c)=0$ and $y(t)\not\equiv 0$ on $a<t<c$.
 
With a same method in the previous theorem, it can be proved that  $J^{\prime\prime}(x_0,y)>0$ holds if and only if there is no point $c$ conjugate to $a$ on $0<t\neq b$.

\section{\textbf{The length and the Busemann-Hausdorff area in the 2-dimensional Finsler space forms with $k=0$}}

When the 2-dimensional Finsler space form has flag curvature $k=0$, the non-Riemannian metric $F_B$ is given by the Berwald's example on a 2-dimensional open ball $B^2(1)=\{x\in R^2| |x|<1\}$:
\[F_{B}=\frac{(\sqrt{|y|^2-(|x|^2|y|^2-\langle x,y\rangle^2)}+\langle x,y\rangle)^2}{(1-|x|^2)^2\sqrt{|y|^2-(|x|^2|y|^2-\langle x,y\rangle^2)}}\]
where $x\in B^2(1)$ and $y$ is an arbitrary tangent vector in the tangent plane $T_xB^2(1)$.
If using the notation of the spherically symmetric Finsler metrics, this metric can be written as $F_{B}=u\phi(r,s)$, where $\phi=\frac{(\sqrt{1-r^2+s^2}+s)^2}{(1-r^2)^2\sqrt{1-r^2+s^2}}$, $u=|y|$, $r=|x|^2$, $s=\frac{\langle x,y\rangle}{|y|}$. The above 2-dimensional Finsler space form with flag curvature $k=0$ will be denoted by $(F_B, B^2(1))$. 

Suppose a parametrized curve $c(t)=(x_1(t), x_2(t))$ in  the open ball $B^2(1)$  has continuous first and second derivatives for $t\in [t_0, t_1]$, satisfying $c(t_0)=c(t_1)$.
In the Finsler space form $(F_B, B^2(1))$, the length of the curve $c(t)$ is  giving by 
 \begin{eqnarray*}L&=&\int_{t_0}^{t_1}\frac{(\sqrt{(1-x_1^2-x_2^2)(\dot{x_1}^2+\dot{x_2}^2)+(x_1\dot{x_1}+x_2\dot{x_2})^2}+x_1\dot{x_1}+x_2\dot{x_2})^2}{(1-x_1^2-x_2^2)^2\sqrt{(1-x_1^2-x_2^2)(\dot{x_1}^2+\dot{x_2}^2)+(x_1\dot{x_1}+x_2\dot{x_2})^2}}dt\\
&=&\int_{t_0}^{t_1}\frac{(1-x_1^2-x_2^2)(\dot{x_1}^2+\dot{x_2}^2)+2(x_1\dot{x_1}+x_2\dot{x_2})^2}{(1-x_1^2-x_2^2)^2\sqrt{(1-x_1^2-x_2^2)(\dot{x_1}^2+\dot{x_2}^2)+(x_1\dot{x_1}+x_2\dot{x_2})^2}}dt\\
&&+  2\int_{t_0}^{t_1}\frac{x_1\dot{x_1}+x_2\dot{x_2}}{(1-x_1^2-x_2^2)^2}dt\\
&=&\int_{t_0}^{t_1}\frac{(1-x_1^2-x_2^2)(\dot{x_1}^2+\dot{x_2}^2)+2(x_1\dot{x_1}+x_2\dot{x_2})^2}{(1-x_1^2-x_2^2)^2\sqrt{(1-x_1^2-x_2^2)(\dot{x_1}^2+\dot{x_2}^2)+(x_1\dot{x_1}+x_2\dot{x_2})^2}}dt.
\end{eqnarray*}
The last equality holds since the curve is closed and the integral $\int_{t_0}^{t_1}\frac{x_1\dot{x_1}+x_2\dot{x_2}}{(1-x_1^2-x_2^2)^2}dt$ vanishes. 

The Busemann-Hausdorff volume form $\sigma_{BH}$ of a Finsler metric $F$ on a $n$-dimensional manifold $M$ is  defined by 
\[\sigma_{BH}=\frac{Vol(B^n(1))}{Vol\{(y^i)\in R^n| F(x_1,x_2)\le 1\}}.\]  

Similar to the calculation of the Busemann-Hausdorff volume form of the $(\alpha,\beta)$ metric in X. Cheng and Z. Shen's paper \cite{CS} , we have the following lemma for a spherically symmetric Finsler metric.
\begin{lemma}\label{le1}Let $F=u\phi(r,s)$ be a spherically symmetric Finsler metric on an open set $\Omega$ in $R^n$. Then the Busemann-Hausdorff volume form $\sigma_{BH}$ of $F$ is given by
\[\sigma_{BH}(r)=\frac{\int_{0}^{\pi}\sin ^{n-2} t dt}{\int_{0}^{\pi} \frac{\sin^{n-2}t }{\phi(r, r\cos t)^n}dt}.\]
\end{lemma}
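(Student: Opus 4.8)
The plan is to compute the Euclidean volume of the indicatrix ball $\{y\in R^n: F(x,y)\le 1\}$ at a point $x$ with $|x|=r$ directly, and then form the ratio with $Vol(B^n(1))$. The essential observation is that $F(x,\cdot)$ is positively homogeneous of degree one in $y$, so $\{y: F(x,y)\le 1\}$ is star-shaped about the origin, and introducing polar coordinates $y=\rho\omega$ with $\rho=|y|\ge 0$ and $\omega\in S^{n-1}$ turns the defining inequality into a bound on $\rho$ alone.

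First I would fix $x$ with $|x|=r$ and, for $\omega\in S^{n-1}$, let $t=t(\omega)\in[0,\pi]$ denote the angle between $\omega$ and $x/|x|$; then spherical symmetry and $1$-homogeneity give $F(x,\rho\omega)=\rho\,\phi(|x|,\langle x,\omega\rangle)=\rho\,\phi(r,r\cos t)$. Since $\phi>0$ (so that $F$ is a bona fide Finsler norm), the condition $F(x,y)\le 1$ is equivalent to $0\le\rho\le\phi(r,r\cos t)^{-1}$, whence
$$Vol\{y: F(x,y)\le 1\}=\int_{S^{n-1}}\!\int_0^{\phi(r,r\cos t(\omega))^{-1}}\rho^{\,n-1}\,d\rho\,d\omega=\frac1n\int_{S^{n-1}}\frac{d\omega}{\phi(r,r\cos t(\omega))^{n}}.$$

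Next I would evaluate the $S^{n-1}$-integral in geodesic polar coordinates with pole $x/|x|$: the surface element is $\sin^{n-2}t\,dt\,d\theta$, where $t\in[0,\pi]$ and $d\theta$ is the volume element of $S^{n-2}$, and the integrand depends only on $t$. Hence the $S^{n-2}$-factor separates,
$$Vol\{y: F(x,y)\le 1\}=\frac{Vol(S^{n-2})}{n}\int_0^\pi\frac{\sin^{n-2}t}{\phi(r,r\cos t)^{n}}\,dt,$$
and the same decomposition applied to the ordinary unit ball gives $Vol(B^n(1))=\tfrac1n Vol(S^{n-1})=\tfrac{Vol(S^{n-2})}{n}\int_0^\pi\sin^{n-2}t\,dt$. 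Dividing and cancelling the common factor $Vol(S^{n-2})/n$ produces the asserted formula.

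I do not expect a genuine obstacle here: the only point requiring care is the legitimacy of the passage to polar coordinates, which rests solely on the positive homogeneity of $F$ in $y$ (no convexity of the indicatrix is needed) together with the positivity of $\phi$ ensuring the inner radial integral is finite; the remainder is Fubini's theorem and the standard spherical volume element. For the case $n=2$ used throughout the rest of the paper this specializes to $\sin^{n-2}t\equiv 1$ and $Vol(S^0)=2$, i.e. $\sigma_{BH}(r)=\pi\big/\int_0^\pi\phi(r,r\cos t)^{-2}\,dt$.
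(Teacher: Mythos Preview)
Your argument is correct and is the standard computation: polar coordinates in $y$ reduce the indicatrix volume to a spherical integral, and geodesic polar coordinates on $S^{n-1}$ about $x/|x|$ exploit the spherical symmetry to leave a one-dimensional integral in the colatitude $t$. The paper itself omits the proof entirely, referring the reader to Cheng--Shen \cite{CS} for the analogous computation in the $(\alpha,\beta)$ setting; your write-up is exactly the argument one would expect there and needs no further justification.
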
 
The proof is omitted here and one can consult \cite{CS}  for the details.  

\begin{theorem} \label{th1}
For 2-dimensional non-Riemanian Finsler space form with $k=0$ $(F_{B}, B^2(1))$,  the Busemann-Hausdorff area form can be expressed as
\[\sigma_{BH}(F_{B})=\frac{1}{1+\frac{3}{2}r^2},\]
where $r:=|x|$.
\end{theorem}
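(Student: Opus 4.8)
The plan is to apply Lemma~\ref{le1} with $n=2$ and then evaluate the resulting one–variable integral. Since $\sin^{0}t\equiv 1$, the lemma gives
$$\sigma_{BH}(F_B)=\frac{\pi}{\displaystyle\int_0^\pi \phi(r,r\cos t)^{-2}\,dt},$$
so everything comes down to computing the denominator. The first step is to simplify the integrand: substituting $s=r\cos t$ into $\phi(r,s)=\dfrac{(\sqrt{1-r^2+s^2}+s)^2}{(1-r^2)^2\sqrt{1-r^2+s^2}}$ and using $1-r^2+r^2\cos^2 t=1-r^2\sin^2 t$, it is convenient to write $A:=\sqrt{1-r^2\sin^2 t}$ and $B:=r\cos t$, so that $\phi(r,r\cos t)=\dfrac{(A+B)^2}{(1-r^2)^2A}$ and hence $\phi(r,r\cos t)^{-2}=\dfrac{(1-r^2)^4A^2}{(A+B)^4}$.

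The conceptual heart of the argument is the identity $A^2-B^2=(1-r^2\sin^2 t)-r^2\cos^2 t=1-r^2$, a constant. I would exploit the symmetry $t\mapsto \pi-t$ (under which $A$ is unchanged while $B\mapsto -B$) to replace the integral of $A^2/(A+B)^4$ by the average of $A^2/(A+B)^4$ and $A^2/(A-B)^4$, and then use $(A+B)^4(A-B)^4=(A^2-B^2)^4=(1-r^2)^4$ together with $(A+B)^4+(A-B)^4=2(A^4+6A^2B^2+B^4)$ to get
$$\int_0^\pi\frac{A^2}{(A+B)^4}\,dt=\frac{1}{(1-r^2)^4}\int_0^\pi A^2\bigl(A^4+6A^2B^2+B^4\bigr)\,dt.$$
The factor $(1-r^2)^4$ here cancels the $(1-r^2)^4$ from $\phi^{-2}$, so that $\sigma_{BH}(F_B)=\pi\big/\int_0^\pi A^2(A^4+6A^2B^2+B^4)\,dt$, and the problem is now genuinely elementary.

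It remains to compute this last integral, and this is the step where the actual work lies. Expanding $A^2(A^4+6A^2B^2+B^4)$ as a polynomial in $\sin^2 t$ and $\cos^2 t$ (recall $A^2=1-r^2\sin^2 t$, $B^2=r^2\cos^2 t$) and integrating term by term with the elementary formulas $\int_0^\pi\sin^{2k}t\,dt=\pi,\ \tfrac\pi2,\ \tfrac{3\pi}{8},\ \tfrac{5\pi}{16}$ for $k=0,1,2,3$ and the corresponding mixed integrals $\int_0^\pi\sin^{2i}t\cos^{2j}t\,dt$, one checks that the coefficients of $r^4$ and $r^6$ cancel identically, leaving $\int_0^\pi A^2(A^4+6A^2B^2+B^4)\,dt=\pi\bigl(1+\tfrac32 r^2\bigr)$. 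Substituting this back yields $\sigma_{BH}(F_B)=\dfrac{1}{1+\frac32 r^2}$, as claimed. The only real obstacle is bookkeeping: organizing the expansion so that the $r^4$ and $r^6$ contributions are transparently seen to vanish; as a sanity check, at $r=0$ the metric is Euclidean at the origin and the formula correctly gives $\sigma_{BH}=1$.
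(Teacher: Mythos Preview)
Your proof is correct and follows the same outline as the paper: apply Lemma~\ref{le1} with $n=2$ to reduce to the integral $\int_0^\pi (1-r^2)^4(1-r^2\sin^2 t)\big/(\sqrt{1-r^2\sin^2 t}+r\cos t)^4\,dt$, then evaluate it as $\pi(1+\tfrac32 r^2)$. The paper simply asserts this value ``by a direct calculation,'' whereas you actually carry it out via the symmetrization $t\mapsto\pi-t$ together with the identity $A^2-B^2=1-r^2$, which cleanly converts the quartic denominator into a polynomial integrand; this is a genuine improvement in exposition, and the cancellation of the $r^4$ and $r^6$ terms that you describe does indeed occur exactly as claimed.
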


\begin{proof}
From Lemma \ref{le1}, we have 
\begin{eqnarray*}
\sigma_{BH}(F_{B})&=&\frac{\int_{0}^{\pi}1dt}{\int_{0}^{\pi}\frac{1}{\phi(r,r\cos t)^2}dt}\\
&=&\frac{\pi}{\int_{0}^{\pi}\frac{(1-r^2)^4(1-r^2\sin^2 t)}{(\sqrt{1-r^2\sin^2 t}+r\cos t)^4}dt}.
\end{eqnarray*}
By a direct calculation, one can obtain
\[\int_{0}^{\pi}\frac{(1-r^2)^4(1-r^2\sin^2 t)}{(\sqrt{1-r^2\sin^2 t}+r\cos t)^4}dt=\pi+\frac{3}{2}\pi r^2.\] 
Therefore $\sigma_{BH}(F_B)=\frac{1}{1+\frac{3}{2}r^2}$.
\end{proof}

The Green formula 
\[\int_{\partial\Omega}Pdx+Qdy=\iint_{\Omega}(\frac{\partial Q}{\partial x}-\frac{\partial P}{\partial y})dxdy\] applies the following Busemann-Hausdorff area formula.
\begin{theorem} For 2-dimensional non-Riemanian Finsler space form with $k=0$ $(F_{B}, B^2(1))$, the Busemann-Hausdorff area $A_{BH}$ enclosed by a simple closed curve $c(t)=(x_1(t),x_2(t))$ where $t\in [t_0,t_1]$ and $c(t_0)=c(t_1)$ is of form
 \[A_{BH}=\frac{1}{3}\int_{t_0}^{t_1}\frac{\ln(3(x_1^2+x_2^2)+2)}{x_1^2+x_2^2}(x_1\dot{x_2}-x_2\dot{x_1})dt.\]
\end{theorem}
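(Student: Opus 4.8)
The plan is to pass from the area integral to a line integral via Green's formula, so that the whole task reduces to exhibiting a suitable primitive $1$-form for the area density. By the definition of $A_{BH}$ and by Theorem \ref{th1},
\[
A_{BH}=\iint_{\Omega}\sigma_{BH}(F_B)\,dx_1\,dx_2=\iint_{\Omega}\frac{dx_1\,dx_2}{1+\tfrac32 r^2},\qquad r^2:=x_1^2+x_2^2,
\]
where $\Omega$ denotes the region bounded by $c$. Hence it suffices to produce functions $P=P(x_1,x_2)$ and $Q=Q(x_1,x_2)$ with $\dfrac{\partial Q}{\partial x_1}-\dfrac{\partial P}{\partial x_2}=\dfrac{1}{1+\tfrac32 r^2}$; then the Green formula gives $A_{BH}=\int_{t_0}^{t_1}\bigl(P\dot x_1+Q\dot x_2\bigr)\,dt$, and it only remains to recognize the integrand as the one in the statement.

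Since the density depends on $x$ only through $r^2$, I would search for $P,Q$ of the rotationally invariant form $P=-x_2\,\psi(r^2)$, $Q=x_1\,\psi(r^2)$. A short differentiation gives
\[
\frac{\partial Q}{\partial x_1}-\frac{\partial P}{\partial x_2}=2\psi(r^2)+2r^2\psi'(r^2),
\]
so, writing $u=r^2$, the requirement becomes the elementary first-order equation $\psi(u)+u\psi'(u)=\dfrac{1}{2+3u}$, that is, $\dfrac{d}{du}\bigl(u\psi(u)\bigr)=\dfrac{1}{2+3u}$. Integrating yields $u\psi(u)=\tfrac13\ln(2+3u)+C$; taking $C=0$ gives $\psi(u)=\dfrac{\ln(3u+2)}{3u}$. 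Substituting $P=-x_2\psi(r^2)$, $Q=x_1\psi(r^2)$ into $A_{BH}=\int_{t_0}^{t_1}(P\dot x_1+Q\dot x_2)\,dt$ and noting that $P\dot x_1+Q\dot x_2=\psi(r^2)(x_1\dot x_2-x_2\dot x_1)$ produces exactly
\[
A_{BH}=\frac13\int_{t_0}^{t_1}\frac{\ln\bigl(3(x_1^2+x_2^2)+2\bigr)}{x_1^2+x_2^2}\bigl(x_1\dot x_2-x_2\dot x_1\bigr)\,dt .
\]

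The step needing care — and the main obstacle to a fully rigorous argument — is that the chosen primitive $\psi(r^2)$ behaves like $(\ln 2)/(3r^2)$ near the origin, so the vector field $(P,Q)$ is singular at $0$ and the Green formula cannot be applied directly when $c$ winds around the origin (which is exactly the situation for the extremal circle centered at the origin). The remedy is to apply Green's formula on $\Omega\setminus D_\varepsilon(0)$ and let $\varepsilon\to 0$: the double integral converges to $A_{BH}$ because $\sigma_{BH}$ is bounded, while the boundary term over $\partial D_\varepsilon(0)$ tends to $\tfrac{2\pi}{3}\ln 2$ times the winding number. In other words, the choice of the integration constant $C$ only shifts the right-hand side by a curve-independent (topological) constant, equivalently by the integral of a null Lagrangian; since adding such a constant to the area functional alters neither the isoperimetric extremals, nor the Weierstrass function, nor the second variation, the formula as stated is precisely what is needed in the sequel. (Alternatively, the choice $C=-\tfrac13\ln 2$ makes $\psi$, and hence $(P,Q)$, smooth across the origin and removes the issue entirely.)
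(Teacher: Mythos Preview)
Your approach is essentially the same as the paper's: both reduce the double integral to a line integral via Green's formula with the identical choice $P=-\tfrac13\frac{\ln(3r^2+2)}{r^2}x_2$, $Q=\tfrac13\frac{\ln(3r^2+2)}{r^2}x_1$; the only cosmetic difference is that you \emph{derive} $\psi$ by solving $(u\psi)'=(2+3u)^{-1}$, whereas the paper simply posits $P,Q$ and checks the identity $\partial_{x_1}Q-\partial_{x_2}P=\sigma_{BH}$. Your closing paragraph actually goes beyond the paper: the observation that $(P,Q)$ is singular at the origin (so Green's formula needs the $\varepsilon$-excision argument, or the alternative constant $C=-\tfrac13\ln 2$) is a genuine technical point that the paper passes over in silence, and your remark that this only shifts the functional by a null Lagrangian correctly explains why it is harmless for the subsequent variational analysis.
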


\begin{proof}
From Theorem \ref{th1}, the Busemann-Hausdorff area is given by
\[A_{BH}=\iint_{\Omega}\frac{1}{1+\frac{3}{2}(x_1^2+x_2^2)}dx_1dx_2\]
where $\Omega$ is the domain enclosed by a simple closed curve $c(t)=(x(t),y(t))$. It only need to be proved that 
\begin{equation}\label{eq1}
\iint_{\Omega}\frac{1}{1+\frac{3}{2}(x_1^2+x_2^2)}dx_1dx_2=\frac{1}{3}\int_{\partial\Omega}\frac{\ln(3(x_1^2+x_2^2)+2)}{x_1^2+x_2^2}(x_1dx_2-x_2dx_1).\end{equation}
If let $P=-\frac{1}{3}\frac{\ln(3(x_1^2+x_2^2)+2)}{x_1^2+x_2^2}x_2$ and $Q=\frac{1}{3}\frac{\ln(3(x_1^2+x_2^2)+2)}{x_1^2+x_2^2}x_1$, it could be easily checked that
\[\frac{\partial Q}{\partial x_1}-\frac{\partial P}{\partial x_2}=\frac{1}{1+\frac{3}{2}(x_1^2+x_2^2)}\]
holds. By the Green formula, the equation (\ref{eq1}) holds. Thus the formula of the Busemann-Hausdorff area formula of $(F_{B}, B^2(1))$ is yielded. 
\end{proof}

 \section{\textbf{The Euler-Langrange equation of the isoperimetric problem in the 2-dimensional Finsler space forms with $k=0$}}
When using the Busemann-Hausdorff area in $(F_B, B^2(1))$, the isoperimetric problem is stated as to seek 
 \[\max\{A_{BH}(D)|\text{where D is enclosed by an arbitrary simple closed $C^{\infty}$curve}  \}\]
 under the constrained condition the curve's length $L=l$.  This is the classical isoperimetric problem in the theory  of variational calculous.  We can apply the so-called Lagrange multipliers method to handle this problem.
 
Assume a simple closed curve  in  $(F_B, B^2(1))$ is given by $c(t)=(x_1(t),x_2(t))$ where $t\in [t_0,t_1]$ and $c(t_0)=c(t_1)$. 
Let  $A_{BH}=\int_{t_0}^{t_1}f(x_1,x_2,\dot{x_1},\dot{x_2})dt$ be the Busemann-Hausdorff area enclosed by the curve 
and $L=\int_{t_0}^{t_1}g(x_1,x_2,\dot{x_1},\dot{x_2})$
 be the length of the curve where 
\[f(x_1,x_2,\dot{x_1},\dot{x_2})=\frac{1}{3}\frac{\ln(3(x_1^2+x_2^2)+2)}{x_1^2+x_2^2}(x_1\dot{x_2}-x_2\dot{x_1})\]
and
\[g(x_1,x_2,\dot{x_1},\dot{x_2})=\frac{(1-x_1^2-x_2^2)(\dot{x_1}^2+\dot{x_2}^2)+2(x_1\dot{x_1}+x_2\dot{x_2})^2}{(1-x_1^2-x_2^2)^2\sqrt{(1-x_1^2-x_2^2)(\dot{x_1}^2+\dot{x_2}^2)+(x_1\dot{x_1}+x_2\dot{x_2})^2}}.\]
Obviously, $f$ and $g$ are independent of the parameter $t$. What's more, $f$ and $g$ are positively homogeneous in $(\dot{x_1}, \dot{x_2})$ of degree one and have continuous derivatives of the first three orders.

Let
\begin{eqnarray*}
J&=&A_{BH}+\lambda L\\
&=&\int_{t_0}^{t_1}h(x_1,x_2,\dot{x}_1,\dot{x}_2,\lambda) dt,
\end{eqnarray*}
where $h=f+\lambda g$.
The Euler-Lagrange equations of $J$ are
\[\frac{\partial h}{\partial x_1}-\frac{d}{dt}\frac{\partial h}{\partial \dot{x_1}}=0,\quad \frac{\partial h}{\partial x_2}-\frac{d}{dt}\frac{\partial h}{\partial \dot{x_2}}=0.\]
Plugging $h$ into above equation, one can obtain a second ODE system with  many quite long terms. 

Actually, the Euler-Lagrange equations have a concise form when using the polar coordinate.  
In the polar coordinate, let the curve be $c(t)=(r(t)cos(t),r(t)sin(t))$. Then
\begin{equation*}
f(r,\dot{r})=\frac{1}{3}\ln(3r^2+2), \quad g(r,\dot{r})=\frac{r^2(1-r^2)+(1+r^2)\dot{r}^2}{(1-r^2)^2\sqrt{r^2(1-r^2)+\dot{r}^2}}
\end{equation*}
and
\begin{equation}\label{eq2}
h(r,\dot{r})=f+\lambda g=\frac{1}{3}\ln(3r^2+2)+\lambda \frac{r^2(1-r^2)+(1+r^2)\dot{r}^2}{(1-r^2)^2\sqrt{r^2(1-r^2)+\dot{r}^2}}.
\end{equation}
The Euler-Lagrange equation $\frac{\partial h}{\partial r}-\frac{d}{dt}\frac{\partial h}{\partial \dot{r}}=0$ can be simplified as 
\begin{eqnarray} \label{eq3}
&\frac{2r}{3r^2+2}+\frac{\lambda (2r(1-r^2)-2r^3+2r\dot{r}^2)}{(1-r^2)^2D}+\frac{4\lambda (r^2(1-r^2)+(1+r^2)\dot{r}^2)r}{(1-r^2)^3D} \\
&-\frac{1}{2}\frac{\lambda(r^2(1-r^2)+(1+r^2)\dot{r}^2)(2r(1-r^2)-2r^3)}{(1-r^2)^2D^3}-\frac{d}{dt}\frac{\lambda \dot{r}(r^2+r^4-2r^6+(1+r^2)\dot{r}^2)}{(1-r^2)^2D^3}=0, \nonumber
\end{eqnarray}
where $D=\sqrt{r^2(1-r^2)+\dot{r}^2}$. 
Integrating above equation will lead to the following theorem.

\begin{proposition}
If a curve $c(t)=(r(t)cos(t),r(t)sin(t))$ is a solution of the isoperimetric problem in $(F_{B}, B^2(1))$ when using the Busemann-Hausdorff area, then it must satisfy the equation 
\[\frac{\lambda \dot{r}^2(r^2+r^4-2r^6+(1+r^2)\dot{r}^2)}{(1-r^2)^2(r^2(1-r^2)+\dot{r}^2)^{\frac{3}{2}}} -\frac{1}{3}\ln(3r^2+2)-\lambda \frac{r^2(1-r^2)+(1+r^2)\dot{r}^2}{(1-r^2)^2\sqrt{r^2(1-r^2)+\dot{r}^2}}=C_1,\]
where $C_1$ is a constant. 
\end{proposition}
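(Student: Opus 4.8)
The plan is to exploit the fact that, after passing to the polar parametrization $c(t)=(r(t)\cos t,r(t)\sin t)$ as in \eqref{eq2}, the multiplier Lagrangian $h(r,\dot r)$ carries no explicit $t$-dependence, so that the Euler--Lagrange equation \eqref{eq3} admits the classical Beltrami (energy) first integral. First I would note that a solution of the isoperimetric problem is, by the Lagrange multiplier rule recalled in Section 2 (the normality hypothesis being what guarantees the multiplier exists), an isoperimetric extremal: there is a constant $\lambda$ with $h_r-\frac{d}{dt}h_{\dot r}=0$ along $r(t)$. Since $h$ is autonomous one then has
$$\frac{d}{dt}\bigl(\dot r\,h_{\dot r}-h\bigr)=\ddot r\,h_{\dot r}+\dot r\,\frac{d}{dt}h_{\dot r}-\bigl(h_r\dot r+h_{\dot r}\ddot r\bigr)=\dot r\Bigl(\frac{d}{dt}h_{\dot r}-h_r\Bigr)=0,$$
so $\dot r\,h_{\dot r}-h$ is constant along $r(t)$; denote this constant by $C_1$. (Equivalently, this is exactly what multiplying \eqref{eq3} through by $\dot r$ and integrating produces, which is the ``integrating above equation'' step announced in the text.)

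The remaining work is to compute $h_{\dot r}$ and substitute. Writing $D=\sqrt{r^2(1-r^2)+\dot r^2}$ and $N=r^2(1-r^2)+(1+r^2)\dot r^2$, so that $h=\tfrac13\ln(3r^2+2)+\lambda N/((1-r^2)^2D)$, one differentiates in $\dot r$ using $N_{\dot r}=2(1+r^2)\dot r$ and $D_{\dot r}=\dot r/D$ to get $\partial_{\dot r}(N/D)=\dot r\,(2(1+r^2)D^2-N)/D^3$. The one genuinely computational point is the algebraic collapse
$$2(1+r^2)D^2-N=2(1+r^2)\bigl(r^2(1-r^2)+\dot r^2\bigr)-\bigl(r^2(1-r^2)+(1+r^2)\dot r^2\bigr)=r^2+r^4-2r^6+(1+r^2)\dot r^2,$$
which yields $h_{\dot r}=\lambda\,\dot r\,(r^2+r^4-2r^6+(1+r^2)\dot r^2)/((1-r^2)^2D^3)$. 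Multiplying by $\dot r$ and subtracting $h$ then gives, term for term, precisely the displayed identity, since $\dot r\,h_{\dot r}-h=C_1$ written out is exactly that equation.

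The main obstacle here is bookkeeping rather than anything conceptual: one must carry the rational-plus-radical expression $h$ and its $\dot r$-derivative through without error, and in particular verify the simplification of $2(1+r^2)D^2-N$ to $r^2+r^4-2r^6+(1+r^2)\dot r^2$. A secondary point deserving a sentence is the passage from ``solution of the isoperimetric problem'' to ``isoperimetric extremal with a genuine multiplier $\lambda$''; this is the standard Lagrange multiplier argument, and no delicate regularity issue intervenes because the competing curves are taken to be $C^\infty$.
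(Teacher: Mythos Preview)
Your proof is correct and follows exactly the paper's approach: both use that $h$ in \eqref{eq2} has no explicit $t$-dependence, derive the Beltrami first integral $\dot r\,h_{\dot r}-h=C_1$ from the Euler--Lagrange equation, and then substitute the explicit form of $h$. You supply more detail than the paper does in carrying out that final substitution (in particular the verification that $2(1+r^2)D^2-N=r^2+r^4-2r^6+(1+r^2)\dot r^2$), but the method is identical.
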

\begin{proof} 
From (\ref{eq2}), we know that $h$ is independent of variable $t$. Thus 
\begin{eqnarray*}
\frac{d}{dt}(\dot{r}\frac{\partial h}{\partial \dot{r}}-h)&=&\dot{r}\frac{d}{dt}(\frac{\partial h}{\partial \dot{r}})-\frac{\partial h}{\partial r}\dot{r}\\
&=&-\dot{r}[\frac{\partial h}{\partial r}-\frac{d}{dt}(\frac{\partial h}{\partial \dot{r}})]\\
&=&0.
\end{eqnarray*}
Integrating this equation will obtain
\[\dot{r}\frac{\partial h}{\partial \dot{r}}-h=C_1.\]
Plugging $h$ into above equation will get the result. 
\end{proof}

It is difficult to get all the solutions of above equation.  However, it is easy to see $r=const$ is one of the solutions. 
\begin{theorem}
The circles $c_0=(a\cos(t), a\sin(t))$ centered at the origin, where $a\in (0,1)$ and $t\in [0, 2\pi]$, are isoperimetric extremal with respect to the integral $J$. Moreover, 
$$\lambda_{0}=-\frac{2(1-a^2)^{\frac{5}{2}}a}{6a^4+7a^2+2}<0.$$
\end{theorem}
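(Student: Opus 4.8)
The plan is to verify that the constant function $r(t)\equiv a$ solves the Euler--Lagrange equation (\ref{eq3}) for one suitable value $\lambda=\lambda_0$ of the multiplier, and to extract $\lambda_0$ from that equation. First I would substitute $r\equiv a$, and hence $\dot r\equiv 0$ and $\ddot r\equiv 0$, into (\ref{eq3}). Along such a curve every expression inside the final term $\frac{d}{dt}\bigl[\,\cdots\,\bigr]$ is constant in $t$, so that term vanishes identically, and every summand carrying an explicit factor $\dot r$ also drops out. In fact it is cleaner to observe first, at the level of (\ref{eq2}), that $\frac{d}{dt}\,h_{\dot r}=h_{\dot r r}\dot r+h_{\dot r\dot r}\ddot r$ vanishes along $r\equiv a$; hence the Euler--Lagrange equation collapses to the single scalar condition $h_r(a,0)=0$, which is what I would impose.

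Next I would compute $h_r(r,0)$ directly from (\ref{eq2}). Putting $\dot r=0$ there gives $h(r,0)=\tfrac13\ln(3r^2+2)+\lambda\, r(1-r^2)^{-3/2}$, so that
\[
h_r(r,0)=\frac{2r}{3r^2+2}+\lambda\,\frac{1+2r^2}{(1-r^2)^{5/2}},
\]
using $\frac{d}{dr}\bigl[r(1-r^2)^{-3/2}\bigr]=(1-r^2)^{-5/2}(1+2r^2)$. Imposing $h_r(a,0)=0$ and solving this linear equation for $\lambda$ yields
\[
\lambda_0=-\frac{2a(1-a^2)^{5/2}}{(3a^2+2)(1+2a^2)}=-\frac{2a(1-a^2)^{5/2}}{6a^4+7a^2+2},
\]
where the last step is the identity $(3a^2+2)(1+2a^2)=6a^4+7a^2+2$. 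This is precisely the claimed value of $\lambda_0$, and since every factor is positive for $a\in(0,1)$ it is manifestly negative. As $\lambda_0$ is a genuine constant and $c_0(t)=(a\cos t,a\sin t)$ is a regular, admissible, $C^\infty$ closed curve, the pair $(c_0,\lambda_0)$ satisfies the defining property of an isoperimetric extremal for $J$, which is the assertion.

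The argument is essentially a bookkeeping computation, so I do not expect a genuine obstacle. The only points needing care are: (i) confirming that the $\frac{d}{dt}$-term in (\ref{eq3}) truly vanishes along $r\equiv a$ rather than merely simplifying, which is why I prefer to reduce the Euler--Lagrange equation to $h_r(a,0)=0$ before substituting; and (ii) carrying out the elementary algebra, including the factorization above, so that the normalization of $\lambda_0$ matches the stated form. If one wished, the same constant $\lambda_0$ could be checked against the two Euler--Lagrange equations of the un-reduced $(x_1,x_2)$ formulation using the rotational invariance and parametrization-homogeneity of $f$ and $g$, but working in the polar form (\ref{eq3}) makes this redundant.
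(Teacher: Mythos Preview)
Your proposal is correct and follows essentially the same approach as the paper: substitute the constant circle $r\equiv a$ into the polar Euler--Lagrange equation and solve the resulting scalar relation for $\lambda$. The paper's own proof is a one-liner (``plug $c_0$ into (\ref{eq3})''), whereas you spell out why the $\tfrac{d}{dt}$-term vanishes and compute $h_r(a,0)$ directly from (\ref{eq2}); this is a cleaner presentation of the same computation, not a different method.
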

\begin{proof}
The curves $c_0=(a\cos(t), a\sin(t))$ are obviously the solutions of the Euler-Lagrange Equations of $J$. Plugging $c_0=(a\cos(t), a\sin(t))$ into the equation (\ref{eq3}) will obtain $$\lambda_{0}=-\frac{2(1-a^2)^{\frac{5}{2}}a}{6a^4+7a^2+2}.$$
For $a \in (0,1)$, $\lambda_{c_0}$ is negative.
\end{proof}

\begin{theorem}
The circles $c_0=(a\cos(t), a\sin(t))$ centered at the origin  are normal, where $a\in (0,1)$ and $t \in [0, 2\pi]$. 
\end{theorem}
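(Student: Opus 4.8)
The plan is to unwind the definition of normality from Theorem~2.4: with the single isoperimetric constraint given by the length $L=\int g\,dt$, the circle $c_0$ is \emph{normal} precisely when the Euler--Lagrange covector of the length functional,
\[
P_i \;=\; g_{x_i}-\frac{d}{dt}\,g_{\dot x_i}\qquad (i=1,2),
\]
does not vanish identically along $c_0$; equivalently, $a_1P_i\equiv 0$ along $c_0$ forces $a_1=0$. Because $g$ is positively homogeneous of degree one in $(\dot x_1,\dot x_2)$, Euler's identity yields the relation $\dot x_1P_1+\dot x_2P_2\equiv 0$ along any curve, so $(P_1,P_2)$ is always directed along the transverse direction $(\dot x_2,-\dot x_1)$; hence it suffices to exhibit a single point of $c_0$ at which $(P_1,P_2)\neq(0,0)$, and rotational symmetry then makes it nonzero everywhere.

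First I would reduce to polar coordinates, exploiting the rotational symmetry exactly as in Section~4. Writing the curve as $x_1=r(t)\cos t$, $x_2=r(t)\sin t$ turns $\int g\,dt$ into $\int \tilde g(r,\dot r)\,dt$ with the reduced integrand $\tilde g=\dfrac{r^2(1-r^2)+(1+r^2)\dot r^2}{(1-r^2)^2\sqrt{r^2(1-r^2)+\dot r^2}}$ already recorded in Section~4. A variation staying inside this family has the form $y_i=\eta(t)(\cos t,\sin t)$ with $\eta=\delta r$, so the first-variation identity $\delta\!\int g\,dt=\int\sum_iP_iy_i\,dt$ (no boundary terms, since the curve is closed) specializes to $\int \eta\,(P_1\cos t+P_2\sin t)\,dt=\int\eta\,\big(\tilde g_r-\tfrac{d}{dt}\tilde g_{\dot r}\big)\,dt$ for every periodic $\eta$. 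By the fundamental lemma of the calculus of variations, the radial component of $(P_1,P_2)$ along any such curve equals the reduced Euler--Lagrange operator $\tilde g_r-\tfrac{d}{dt}\tilde g_{\dot r}$, so it is enough to check this is nonzero along $c_0$.

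Then I would evaluate along $c_0$, where $r\equiv a$ and hence $\dot r\equiv 0$, $\ddot r\equiv 0$ (and $\tilde g$ is smooth near $(a,0)$ since $|a|<1$ and $a^2(1-a^2)>0$). The chain rule gives $\tfrac{d}{dt}\tilde g_{\dot r}=\tilde g_{\dot r r}\dot r+\tilde g_{\dot r\dot r}\ddot r=0$ along $c_0$, so the reduced operator collapses to $\tilde g_r(a,0)$. Since $\partial/\partial r$ at $\dot r=0$ is just differentiation of $r\mapsto\tilde g(r,0)=r(1-r^2)^{-3/2}$, one gets
\[
\tilde g_r(a,0)=\frac{d}{dr}\Big[\frac{r}{(1-r^2)^{3/2}}\Big]_{r=a}=\frac{1+2a^2}{(1-a^2)^{5/2}}>0\qquad(a\in(0,1)).
\]
Hence $P_1\cos t+P_2\sin t=\dfrac{1+2a^2}{(1-a^2)^{5/2}}\neq 0$ along $c_0$, so $(P_1,P_2)\not\equiv(0,0)$ and the only $a_1$ with $a_1P_i\equiv0$ is $a_1=0$; that is, $c_0$ is normal.

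The only step that needs care is the reduction in the second paragraph — that the polar Euler--Lagrange operator is exactly the radial component of $(P_1,P_2)$ — which is the standard principle that restricting a functional to an equivariant family of curves produces the corresponding component of the full variational equations. I would also flag that ``$P_i\neq0$ for $i=1,2$'' in Theorem~2.4 must be read as ``$(P_1,P_2)$ is not identically zero,'' since the homogeneity relation $\dot x_1P_1+\dot x_2P_2\equiv0$ forces each $P_i$ to vanish at isolated values of $t$ (e.g.\ $P_2=0$ wherever $\dot x_1=0$). As an independent sanity check: $P_i\equiv0$ along a curve says exactly that the curve is a geodesic of $F_B$, and $F_B$ is projectively flat so its geodesics are straight line segments, whereas $c_0$ is a circle — again giving $(P_1,P_2)\not\equiv0$ and confirming normality.
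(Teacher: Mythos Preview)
Your argument is correct and reaches the same formulas as the paper, but by a different route. The paper's proof is a bare direct computation: it simply records that along $c_0$ one has $P_1=\dfrac{1+2a^2}{(1-a^2)^{5/2}}\cos t$ and $P_2=\dfrac{1+2a^2}{(1-a^2)^{5/2}}\sin t$, computed straight from the Cartesian expression of $g$. You instead exploit two structural facts: Euler's relation $\dot x_1P_1+\dot x_2P_2\equiv 0$ for degree-one homogeneous integrands, which pins $(P_1,P_2)$ to the radial direction along $c_0$, and the symmetry reduction identifying the radial component $P_1\cos t+P_2\sin t$ with the one-variable operator $\tilde g_r-\tfrac{d}{dt}\tilde g_{\dot r}$, which on $c_0$ collapses to the elementary derivative $\tfrac{d}{dr}\bigl[r(1-r^2)^{-3/2}\bigr]\big|_{r=a}=\dfrac{1+2a^2}{(1-a^2)^{5/2}}$. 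Combining the two recovers exactly the paper's expressions for $P_1,P_2$ without the lengthy Cartesian differentiation. Your side remarks are also on point: the statement ``$P_i\neq 0$'' in Theorem~2.4 must indeed be read as ``$(P_1,P_2)\not\equiv 0$'' (each $P_i$ has isolated zeros by the homogeneity relation), and the projective-flatness sanity check---geodesics of $F_B$ are straight lines, so a circle cannot be a length extremal---is a clean independent confirmation that the paper does not mention. What your approach buys is conceptual transparency and an explanation of \emph{why} $(P_1,P_2)$ points radially; what the paper's approach buys is brevity and self-containment, with no appeal to the reduction principle.
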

\begin{proof}
Along the circles $c_0$, it can be calculated that
  \[P_1=g_{x_1}-\frac{d g_{\dot{x}_1}}{dt}=\frac{(1+2a^2)}{(1-a^2)^{\frac{5}{2}}}\cos(t),\]
  \[P_2=g_{x_2}-\frac{d g_{\dot{x}_2}}{dt}=\frac{(1+2a^2)}{(1-a^2)^{\frac{5}{2}}}\sin(t).\]
Therefore $P_1$ and $P_2$ are not $0$ function and the circles $c_0$ are normal.  
\end{proof}

\section{\textbf{Weierstrass E-function}}
The Weierstrass E-function of the integral $J$ is defined as
\[E(x_1,x_2,\dot{x_1},\dot{x_2},p_1,p_2):=h(x_1,x_2,p_1,p_2)-h(x_1,x_2,\dot{x_1},\dot{x_2})\]
\[-(p_1-\dot{x_1})\frac{\partial h(x_1,x_2,\dot{x}_1,\dot{x}_2)}{\partial \dot{x}_1}-(p_2-\dot{x_2})\frac{\partial h(x_1,x_2,\dot{x}_1,\dot{x}_2)}{\partial \dot{x}_2}\]
and the following negativity of the Weierstrass E-function $E(x_1,x_2,\dot{x_1},\dot{x_2},p_1,p_2)$ can be proved. 

\begin{proposition}
If assuming $\lambda< 0$, then the Weierstrass E-function $E$ of the integral $J$ satisfies 
\[E<0 \]
except for $(p_1,p_2)\neq k (\dot{x}_1, \dot{x}_2) (k>0)$.
\end{proposition}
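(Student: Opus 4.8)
The plan is to exploit the linearity of the Weierstrass excess in its defining integrand, together with the splitting $h=f+\lambda g$. Writing $E[F](x,\dot x,p):=F(x,p)-F(x,\dot x)-\sum_{i}(p_i-\dot x_i)F_{\dot x_i}(x,\dot x)$, the operation $F\mapsto E[F]$ is linear, so $E=E[h]=E[f]+\lambda E[g]$. The area integrand $f(x_1,x_2,\dot x_1,\dot x_2)=\tfrac13\frac{\ln(3r^2+2)}{r^2}\,(x_1\dot x_2-x_2\dot x_1)$, with $r^2=x_1^2+x_2^2$, is \emph{linear} in $(\dot x_1,\dot x_2)$ — it has the form $a(x)\dot x_1+b(x)\dot x_2$ — so $f_{\dot x_1}=a(x)$ and $f_{\dot x_2}=b(x)$ do not depend on $(\dot x_1,\dot x_2)$, and substituting into the definition gives $E[f]\equiv 0$. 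Hence $E=\lambda E[g]$, and, since $\lambda<0$, the whole claim reduces to proving $E[g]>0$ off the positive ray through $\dot x$.

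Next I would recognise $g$ as the Berwald metric $F_B$ modified by a null Lagrangian. By the computation already made in Section~3 (expand $(\sqrt{W}+P)^2=W+P^2+2P\sqrt{W}$ with $W=(1-r^2)(\dot x_1^2+\dot x_2^2)+P^2$ and $P=x_1\dot x_1+x_2\dot x_2$), the length integrand satisfies
\[
F_B=g+\frac{2(x_1\dot x_1+x_2\dot x_2)}{(1-x_1^2-x_2^2)^2}=g+\frac{d}{dt}\!\left(\frac{1}{1-x_1^2-x_2^2}\right),
\]
so that $g(x,\dot x)=F_B(x,\dot x)-\nabla\psi(x)\cdot\dot x$ with $\psi=(1-r^2)^{-1}$. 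The correction $\nabla\psi(x)\cdot\dot x$ is linear in $\dot x$, so by the same observation as above it contributes nothing to the excess, and $E[g]=E[F_B]$, the classical Weierstrass excess of the Finsler norm $y\mapsto F_B(x,y)$ on $T_xB^2(1)$. Using Euler's identity $\dot x\cdot(F_B)_y(x,\dot x)=F_B(x,\dot x)$, this simplifies to
\[
E[F_B](x,\dot x,p)=F_B(x,p)-p_1\,(F_B)_{\dot x_1}(x,\dot x)-p_2\,(F_B)_{\dot x_2}(x,\dot x).
\]

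The last step is the strong convexity of $F_B$. Being a genuine Finsler metric, $F_B(x,\cdot)$ has positive definite fundamental tensor $\tfrac12(F_B^2)_{y^iy^j}$ on $B^2(1)$; hence for each fixed $x$ the indicatrix $\{y:F_B(x,y)=1\}$ is strictly convex, and $(F_B)_y(x,\dot x)$ is the outward conormal of the sublevel set $\{F_B(x,\cdot)\le F_B(x,\dot x)\}$ at $\dot x$. The supporting-hyperplane inequality then gives $F_B(x,p)\ge p\cdot(F_B)_y(x,\dot x)$ for every admissible $p$, with equality exactly when $p/F_B(x,p)$ is the (unique, by strict convexity) contact point of that hyperplane, that is, when $p=k\dot x$ with $k=F_B(x,p)/F_B(x,\dot x)>0$. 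Therefore $E[F_B]>0$ whenever $(p_1,p_2)$ is not a positive multiple of $(\dot x_1,\dot x_2)$, and consequently $E=\lambda E[g]=\lambda E[F_B]<0$ under the hypothesis $\lambda<0$, as asserted.

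The main obstacle is precisely this convexity input — and, more delicately, the \emph{strictness} of the inequality off the radial direction, i.e. the fact that the indicatrix of $F_B$ contains no line segment. I would settle it either by invoking the classical fact that Berwald's example is a bona fide (strongly convex) Finsler metric, or, if a self-contained argument is wanted, by a direct computation showing that the Hessian $(F_B)_{\dot x_i\dot x_j}$ is positive semidefinite with one-dimensional kernel spanned by $\dot x$ — equivalently that $\det\!\big(\tfrac12(F_B^2)_{y^iy^j}\big)>0$ on $B^2(1)$. That determinant computation is routine but lengthy, and is the only place where the explicit form of $F_B$ is genuinely needed.
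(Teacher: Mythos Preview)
Your argument is correct and takes a genuinely different, more conceptual route than the paper. The paper expands $h$ directly, introduces the abbreviations
\[
A=\sqrt{(1-r^2)|\dot x|^2+(x\cdot\dot x)^2},\qquad B=\sqrt{(1-r^2)|p|^2+(x\cdot p)^2},\qquad C=(1-r^2)\,\dot x\cdot p+(x\cdot p)(x\cdot\dot x),
\]
and after a page of algebra reduces $E$ to an expression estimated by the AM--GM inequality $2(x\cdot p)(x\cdot\dot x)\le (x\cdot\dot x)^2B/A+(x\cdot p)^2A/B$ and the Cauchy--Schwarz inequality $C\le AB$, with equality precisely when $p$ is a positive multiple of $\dot x$. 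Your approach bypasses all of this by observing three structural facts: the excess operator $F\mapsto E[F]$ is linear; both $f$ and the correction $F_B-g$ are linear in $\dot x$ and hence contribute nothing; and for any strongly convex Finsler norm the excess $E[F_B](x,\dot x,p)=F_B(x,p)-p\cdot(F_B)_{\dot x}(x,\dot x)$ is automatically nonnegative, vanishing only on the positive ray through $\dot x$. What your route buys is transparency---it makes clear that the Weierstrass condition here is nothing more than the strong convexity of the underlying Finsler metric, and would go through verbatim for any reversibilised Finsler length functional paired with an area functional linear in $\dot x$. What the paper's computation buys is self-containment: it never appeals to the (true but nontrivial) fact that Berwald's example has positive definite fundamental tensor, which you correctly flag as the one genuine input your argument requires.
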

\begin{proof} 
For our convenance, the following notation are introduced:
$$A=\sqrt{(1-x_1^2-x_2^2)(\dot{x_1}^2+\dot{x_2}^2)+(x_1\dot{x_1}+x_2\dot{x_2})^2},$$
$$B=\sqrt{(1-x_1^2-x_2^2)(p_1^2+p_2^2)+(x_1p_1+x_2p_2)^2},$$
$$C=(1-x_1^2-x_2^2)(\dot{x_1}p_1+\dot{x_2}p_2)+(x_1p_1+x_2p_2)(x_1\dot{x_1}+x_2\dot{x_2}).$$
Notice that the function $h(x_1,x_2,\dot{x_1},\dot{x_2})$ which is of the form
\begin{eqnarray*}
h&=&\frac{1}{3}\frac{\ln (3(x_1^2+x_2^2)+2)}{x_1^2+x_2^2}(x_1\dot{x_2}-x_2\dot{x_1})\\
&&+\lambda\frac{(1-x_1^2-x_2^2)(\dot{x_1}^2+\dot{x_2}^2)+2(x_1\dot{x_1}+x_2\dot{x_2})^2}{(1-x_1^2-x_2^2)^2\sqrt{(1-x_1^2-x_2^2)(\dot{x_1}^2+\dot{x_2}^2)+(x_1\dot{x_1}+x_2\dot{x_2})^2}}
\end{eqnarray*}
can be written as
\begin{eqnarray*}
h&=&\frac{1}{3}\frac{\ln (3(x_1^2+x_2^2)+2)}{x_1^2+x_2^2}(x_1\dot{x_2}-x_2\dot{x_1})\\
&&+\lambda\frac{\sqrt{(1-x_1^2-x_2^2)(\dot{x_1}^2+\dot{x_2}^2)+(x_1\dot{x_1}+x_2\dot{x_2})^2}}{(1-x_1^2-x_2^2)^2}\\
&&+\lambda\frac{(x_1\dot{x_1}+x_2\dot{x_2})^2}{(1-x_1^2-x_2^2)^2\sqrt{(1-x_1^2-x_2^2)(\dot{x_1}^2+\dot{x_2}^2)+(x_1\dot{x_1}+x_2\dot{x_2})^2}}.
\end{eqnarray*}
By using the notation, $h$ is given by
\[
h=\frac{1}{3}\frac{\ln (3(x_1^2+x_2^2)+2)}{x_1^2+x_2^2}(x_1\dot{x_2}-x_2\dot{x_1})+\lambda\frac{A}{(1-x_1^2-x_2^2)^2}+\lambda\frac{(x_1\dot{x_1}+x_2\dot{x_2})^2}{(1-x_1^2-x_2^2)^2A}.
\]
Plugging $h$ into the Weierstrass E-function will yield
\begin{eqnarray*}
E&=&\frac{1}{3}\frac{\ln\big(3(x_1^2+x_2^2)+2\big)}{x_1^2+x_2^2}(x_1p_2-x_2p_1)+\lambda\frac{B}{(1-x_1^2-x_2^2)^2}+\lambda\frac{(x_1p_1+x_2p_2)^2}{(1-x_1^2-x_2^2)^2B}\\
&&-\frac{1}{3}\frac{\ln\big(3(x_1^2+x_2^2)+2\big)}{x_1^2+x_2^2}(x_1\dot{x_2}-x_2\dot{x_1})-\lambda\frac{A}{(1-x_1^2-x_2^2)^2}-\lambda\frac{(x_1\dot{x_1}+x_2\dot{x_2})^2}{(1-x_1^2-x_2^2)^2A}\\
&&-(p_1-\dot{x_1})\big[ -\frac{1}{3}\frac{\ln\big(3(x_1^2+x_2^2)+2\big)}{x_1^2+x_2^2}x_2+\lambda\frac{(1-x_1^2-x_2^2)\dot{x}_1+(x_1\dot{x}_1+x_2\dot{x}_2)x_1}{(1-x_1^2-x_2^2)^2A}   \\
&&+\lambda\frac{2(x_1\dot{x}_1+x_2\dot{x}_2)x_1}{(1-x_1^2-x_2^2)^2A}-\lambda\frac{(x_1\dot{x}_1+x_2\dot{x}_2)^2\big((1-x_1^2-x_2^2)\dot{x}_1+(x_1\dot{x}_1+x_2\dot{x}_2)x_1\big)}{(1-x_1^2-x_2^2)^2A^3}\big]   \\
&&-(p_2-\dot{x_2})\big[ \frac{1}{3}\frac{\ln\big(3(x_1^2+x_2^2)+2\big)}{x_1^2+x_2^2}x_1+\lambda\frac{(1-x_1^2-x_2^2)\dot{x}_2+(x_1\dot{x}_1+x_2\dot{x}_2)x_2}{(1-x_1^2-x_2^2)^2A}  \\
&&+\lambda\frac{2(x_1\dot{x}_1+x_2\dot{x}_2)x_2}{(1-x_1^2-x_2^2)^2A}-\lambda\frac{(x_1\dot{x}_1+x_2\dot{x}_2)^2\big((1-x_1^2-x_2^2)\dot{x}_2+(x_1\dot{x}_1+x_2\dot{x}_2)x_2\big)}{(1-x_1^2-x_2^2)^2A^3}\big].
\end{eqnarray*}
Furthermore, $E$ can be simplified to 
\begin{eqnarray*}
E&=&-\lambda\frac{A}{(1-x_1^2-x_2^2)^2}+\lambda\frac{B}{(1-x_1^2-x_2^2)^2}-\lambda\frac{(x_1\dot{x_1}+x_2\dot{x_2})^2}{(1-x_1^2-x_2^2)^2A}+\lambda\frac{(x_1p_1+x_2p_2)^2}{(1-x_1^2-x_2^2)^2B}\\
&&-\lambda(p_1-\dot{x_1})\frac{(1-x_1^2-x_2^2)\dot{x}_1+(x_1\dot{x}_1+x_2\dot{x}_2)x_1}{(1-x_1^2-x_2^2)^2A}\\
&&-\lambda(p_2-\dot{x_2})\frac{(1-x_1^2-x_2^2)\dot{x}_2+(x_1\dot{x}_1+x_2\dot{x}_2)x_2}{(1-x_1^2-x_2^2)^2A}\\
&&-\lambda(p_1-\dot{x_1})\frac{2(x_1\dot{x}_1+x_2\dot{x}_2)x_1}{(1-x_1^2-x_2^2)^2A}-\lambda(p_2-\dot{x_2})\frac{2(x_1\dot{x}_1+x_2\dot{x}_2)x_2}{(1-x_1^2-x_2^2)^2A}\\
&&+\lambda\frac{(p_1-\dot{x_1})[(1-x_1^2-x_2^2)\dot{x}_1+(x_1\dot{x}_1+x_2\dot{x}_2)x_1](x_1\dot{x}_1+x_2\dot{x}_2)^2}{(1-x_1^2-x_2^2)^2A^3}\\
&&+\lambda\frac{(p_2-\dot{x_2})[(1-x_1^2-x_2^2)\dot{x}_2+(x_1\dot{x}_1+x_2\dot{x}_2)x_2](x_1\dot{x}_1+x_2\dot{x}_2)^2}{(1-x_1^2-x_2^2)^2A^3}\\
&=&-\lambda\frac{A}{(1-x_1^2-x_2^2)^2}+\lambda\frac{B}{(1-x_1^2-x_2^2)^2}-\lambda\frac{(x_1\dot{x_1}+x_2\dot{x_2})^2}{(1-x_1^2-x_2^2)^2A}+\lambda\frac{(x_1p_1+x_2p_2)^2}{(1-x_1^2-x_2^2)^2B}\\
&&+\lambda \frac{1}{(1-x_1^2-x_2^2)^2A}(A^2-C)+\lambda\frac{2(x_1\dot{x}_1+x_2\dot{x}_2)(x_1\dot{x_1}+x_2\dot{x_2}-x_1p_1-x_2p_2)}{(1-x_1^2-x_2^2)^2A}\\
&&+\lambda\frac{(x_1\dot{x}_1+x_2\dot{x}_2)^2}{(1-x_1^2-x_2^2)^2A^3}(C-A^2)\\
&=&\lambda\frac{B}{(1-x_1^2-x_2^2)^2}+\lambda\frac{(x_1p_1+x_2p_2)^2}{(1-x_1^2-x_2^2)^2B}-\lambda\frac{C}{(1-x_1^2-x_2^2)^2A}\\
&&-\frac{2\lambda}{(1-x_1^2-x_2^2)^2A}(x_1p_1+x_2p_2)(x_1\dot{x_1}+x_2\dot{x_2})+\lambda\frac{(x_1\dot{x}_1+x_2\dot{x}_2)^2C}{(1-x_1^2-x_2^2)^2A^3}\\
&=&-\frac{\lambda}{(1-x_1^2-x_2^2)^2A^3B}\big(-A^3B^2-(x_1p_1+x_2p_2)^2A^3\\
&&+2(x_1p_1+x_2p_2)(x_1\dot{x_1}+x_2\dot{x_2})A^2B+CB(1-x_1^2-x_2^2)(\dot{x}_1^2+\dot{x}_2^2)\big)\\
&\leq&-\frac{\lambda}{(1-x_1^2-x_2^2)^2A^3B}\big(-A^3B^2-(x_1p_1+x_2p_2)^2A^3+(x_1\dot{x}_1+x_2\dot{x}_2)^2AB^2\\
&&+(x_1p_1+x_2p_2)^2A^3+CB(1-x_1^2-x_2^2)(\dot{x}_1^2+\dot{x}_2^2)\big)\\
&\leq&-\frac{\lambda}{(1-x_1^2-x_2^2)^2A^3B}\big(-A^3B^2+(x_1\dot{x}_1+x_2\dot{x}_2)^2AB^2\\
&&+AB^2(1-x_1^2-x_2^2)(\dot{x}_1^2+\dot{x}_2^2)\big)=0.
\end{eqnarray*}
The last inequality holds for the Cauchy inequality $C\leq AB$. Obviously, $E=0$ holds if and only if $(p_1, p_2)=k(\dot{x}_1,\dot{x}_2) (k>0)$.
\end{proof}
It is easy to see that the proposition implies the following theorem.
\begin{theorem}
Let $c_0$ be the circle centered at the origin in $B^2(1)$. For each $(x,\dot{x})$ in a neighborhood of $c_0$, the Weierstrass function
$$E(x,\dot{x},u)<0$$ 
holds for every admissible set $(x,u)\neq (x,k\dot{x}) (k>0)$.

\end{theorem}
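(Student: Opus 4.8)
The plan is to obtain this statement as an immediate corollary of the preceding Proposition together with the computation of the multiplier $\lambda_0$ along $c_0$. Recall that the Proposition establishes that, \emph{whenever} the constant $\lambda$ appearing in $h=f+\lambda g$ is negative, the Weierstrass E-function of $J$ satisfies $E<0$ at every admissible point $(x_1,x_2,\dot x_1,\dot x_2)$ for every admissible direction $(p_1,p_2)$ that is not a positive multiple of $(\dot x_1,\dot x_2)$, and that $E=0$ exactly when $(p_1,p_2)=k(\dot x_1,\dot x_2)$ with $k>0$. The derivation there is algebraic and pointwise in $(x,\dot x)$: it uses only the sign condition $\lambda<0$ and the Cauchy--Schwarz inequality $C\le AB$, so its conclusion is valid throughout $B^2(1)$, not merely along a particular curve.

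First I would invoke the earlier Theorem stating that the circle $c_0=(a\cos t,a\sin t)$, $a\in(0,1)$, is an isoperimetric extremal of $J$ with the specific multiplier
$$\lambda_0=-\frac{2(1-a^2)^{5/2}a}{6a^4+7a^2+2},$$
which is strictly negative for every $a\in(0,1)$. Hence the function $h=f+\lambda_0 g$ associated with $c_0$ has a negative multiplier, so the hypothesis of the Proposition is satisfied for this choice of $\lambda$.

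Second, since the Proposition's conclusion is a pointwise statement holding on all of $B^2(1)$, it holds in particular on any neighborhood $\mathcal F$ of $c_0$ in $tx$-space. Applying it with $\lambda=\lambda_0<0$ therefore yields $E(x,\dot x,u)<0$ for all $(x,\dot x)$ with $(x,u)$ admissible and $u\neq k\dot x$, $k>0$, which is precisely the asserted strict Weierstrass-type inequality (with the sign appropriate to the area-\emph{maximization} problem). I expect no genuine obstacle here: the only points to verify are that $\lambda_0<0$, which is done in the earlier Theorem, and that the estimate in the Proposition does not tacitly rely on being evaluated along $c_0$ — it does not, as noted above — so the theorem follows directly.
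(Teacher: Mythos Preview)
Your proposal is correct and follows exactly the paper's approach: the paper states that ``it is easy to see that the proposition implies the following theorem,'' and your argument spells out precisely this implication by combining the Proposition (valid whenever $\lambda<0$) with the earlier computation $\lambda_0<0$ for $c_0$. There is nothing to add.
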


\section{\textbf{The conjugate points of the critical circles}}
For the isoperimetric extremal circles $c_0=(a\cos(t), a\sin(t))$ where $a\in (0, 1)$ in $(F_{B}, B^2(1))$,   the integrands of the Busemann-Hausdorff area $A$ and the length $L$ are given by
\[f(c_0)=\frac{1}{3}\ln (3a^2+2), \quad g(c_0)=\frac{a}{(1-a^2)^{\frac{3}{2}}}.\]
Therefore $$h(c_0)=f(c_0)+\lambda g(c_0)=\frac{1}{3}\ln (3a^2+2)+\lambda\frac{a}{(1-a^2)^{\frac{3}{2}}}.$$
The Jacobi equation along $c_0$ is of the form
\[\Psi(w)+\mu U=0\]
where $\Psi(w)=H_2w-\frac{d}{dt}(H_1w^\prime)$, $U=g_{x_1\dot{x}_2}-g_{\dot{x}_1x_2}+g_1(\dot{x}_1\ddot{x}_2-\ddot{x}_1\dot{x}_2)$,  $H_1=\frac{h_{\dot{x}_1\dot{x}_1}}{\dot{x}_2^2}$, 
$H_2=\frac{h_{x_1x_1}-\ddot{x}_2^2H_1-\frac{d J}{dt}}{\dot{x}_2^2}$, $J=h_{x_1\dot{x}_1}-\dot{x}_2\ddot{x}_2H_1$. Furthermore, $w$ satisfies 
\[\int_{t_0}^t Uw dt=0.\]
Plugging $g$, $h$ and $\lambda$ into above Jacobi equation will yield
\[\frac{2}{a^2(3a^2+2)}\frac{d^2w}{dt^2}+\frac{2(12a^6+46a^4+19a^2-2)}{a^2(3a^2+2)^2(2a^2+1)(a^2-1)}w+\mu\frac{2a^2+1}{a(1-a^2)^2\sqrt{1-a^2}}=0.\]
It can be written as 
\[\frac{d^2w}{dt^2}+\frac{12a^6+46a^4+19a^2-2}{(3a^2+2)(2a^2+1)(a^2-1)}w+\mu\frac{a(2a^2+1)(3a^2+2)}{2(1-a^2)^2\sqrt{1-a^2}}=0.\]
Notice that 
\[
\frac{12a^6+46a^4+19a^2-2}{(3a^2+2)(2a^2+1)(a^2-1)}: \left\{ \begin{array}{ll}  >0, &a\in (0, a_0),\\
=0, &a=a_0,\\
<0, &a\in (a_0,1),
 \end{array}\right.
\]
where $a_0$ is the solution of the equation $12a^6+46a^4+19a^2-2=0$.
Thus the solution of the Jacobi equation is 
\[w=c_1\theta_1(t)+c_2\theta_2(t)+\mu \theta_3(t) \]
where $c_1$ and $c_2$ are arbitrary constants and 
\[\theta_1(t)=\left\{ \begin{array}{ll}  \sin\sqrt{\frac{12a^6+46a^4+19a^2-2}{(3a^2+2)(2a^2+1)(a^2-1)}}t, & a\in (0, a_0),\\
t, &a=a_0,\\
\sinh \sqrt{-\frac{12a^6+46a^4+19a^2-2}{(3a^2+2)(2a^2+1)(a^2-1)}}t, & a\in (a_0, 1),
 \end{array}\right.
\]

\[\theta_2(t)=\left\{ \begin{array}{ll}\cos\sqrt{\frac{12a^6+46a^4+19a^2-2}{(3a^2+2)(2a^2+1)(a^2-1)}}t, & a\in (0, a_0),\\
1, &a=a_0,\\
\cosh \sqrt{-\frac{12a^6+46a^4+19a^2-2}{(3a^2+2)(2a^2+1)(a^2-1)}}t, & a\in (a_0, 1), 
\end{array}\right.
\]

\[\theta_3(t)=\left\{ \begin{array}{ll}\frac{a(2a^2+1)^2(3a^2+2)^2}{2(1-a^2)\sqrt{1-a^2}(12a^6+46a^4+19a^2-2)}, & a\in (0,a_0)\cup (a_0,1),\\
-\frac{a(2a^2+1)(3a^2+2)}{4(1-a^2)^2\sqrt{1-a^2}}t^2, & a=a_0.
\end{array}\right.
\]

Along the isoperimetric extremal circles $c_0$, $t_1$ is the conjugate to the point $t_0$ if and only if  it is possible to find  the constants $c_1$, $c_2$, $\mu$ such that
\[w(t_0)=c_1\theta_1(t_0)+c_2\theta_2(t_0)+\mu\theta_3(t_0)=0,\]
\[w(t_1)=c_1\theta_1(t_1)+c_2\theta_2(t_1)+\mu\theta_3(t_1)=0,\]
\[\int_{t_0}^{t_1}Uw dt=c_1\int_{t_0}^{t_1}U\theta_1dt+c_2\int_{t_0}^{t_1}U\theta_2dt+\mu\int_{t_0}^{t_1}U\theta_3dt=0.\]
Therefore it is necessary that
\[D(t_0, t_1)=\left| \begin{array}{lll}
\theta_1(t_0) &\theta_2(t_0)  &\theta_3(t_0)\\
\theta_1(t_1) &\theta_2(t_1)  &\theta_3(t_1)\\
\int_{t_0}^{t_1}U\theta_1dt &\int_{t_0}^{t_1}U\theta_2dt &\int_{t_0}^{t_1}U\theta_3dt
\end{array}\right|=0.\]
On the other hand, it is easy to get the following expression of $D(t_0, t_1)$:
\[
D(t_0, t_1)=\left\{ \begin{array}{ll}
4\frac{b_3}{b_1}U\sin \frac{b_1(t_1-t_0)}{2}[ \sin\frac{b_1(t_1-t_0)}{2}-\frac{b_1(t_1-t_0)}{2} \cos\frac{b_1(t_1-t_0)}{2}], & a\in (0, a_0),\\
\frac{1}{6}b_4U(t_1-t_0)^2& a=a_0,\\
4\frac{b_3}{b_2}U\sinh \frac{b_2(t_1-t_0)}{2}[ \sinh\frac{b_2(t_1-t_0)}{2}-\frac{b_2(t_1-t_0)}{2} \cos\frac{b_2(t_1-t_0)}{2}], & a\in (a_0, 1),
\end{array}\right.\]
where \[b_1=\sqrt{\frac{12a^6+46a^4+19a^2-2}{(3a^2+2)(2a^2+1)(a^2-1)}}, \quad b_2=\sqrt{-\frac{12a^6+46a^4+19a^2-2}{(3a^2+2)(2a^2+1)(a^2-1)}},\] 
\[b_3=\frac{a(2a^2+1)^2(3a^2+2)^2}{2(1-a^2)\sqrt{1-a^2}(12a^6+46a^4+19a^2-2)}, \quad b_4=-\frac{a(2a^2+1)(3a^2+2)}{4(1-a^2)^2\sqrt{1-a^2}},\]
\[U=\frac{2a^2+1}{a(1-a^2)^2\sqrt{1-a^2}}.\]

When $a\in (0, a_0)$, it can be concluded that the conjugate point $t_1$ is  given by
\[t_1=t_0+\frac{2\pi}{b_1}.\]
From $b_1<1$, we infer there are no conjugate points along $c_0$ where $t\in [0, 2\pi)$.

When $a\in [a_0, 1)$, $D(t_0, t_1)>0$ if $t_1>t_0$ implies that there are no conjugate points along $c_0$.  Hence we have the following theorem.

\begin{theorem}
Along the isoperimetc extremal circles $c_0=(a\cos(t), a\sin(t))$ of the integral $J$ in $(F_{B}, B^2(1))$, there are no conjugate points.
\end{theorem}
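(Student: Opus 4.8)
The plan is to recast the existence of a point conjugate to $t_0$ along $c_0$ as the vanishing, for some $t_1\in(t_0,t_0+2\pi]$, of the $3\times 3$ determinant $D(t_0,t_1)$ formed from the three basic solutions $\theta_1,\theta_2,\theta_3$ of the Jacobi equation together with the isoperimetric side condition $\int_{t_0}^{t_1}Uw\,dt=0$, and then to verify that $D(t_0,t_1)\neq 0$ on that interval in each of the three regimes already isolated above.

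First I would substitute $g$, $h$ and the multiplier $\lambda_0$ from Section 4 into $\Psi(w)+\mu U=0$ and reduce it, along $c_0$, to the constant-coefficient equation
\[\ddot w+\kappa(a)\,w+\mu\,c(a)=0,\qquad \kappa(a)=\frac{12a^6+46a^4+19a^2-2}{(3a^2+2)(2a^2+1)(a^2-1)},\]
whose general solution is $w=c_1\theta_1+c_2\theta_2+\mu\theta_3$ with the explicit $\theta_i$ listed above, distinguishing the cases $\kappa>0$ on $(0,a_0)$, $\kappa=0$ at $a=a_0$, and $\kappa<0$ on $(a_0,1)$, where $a_0$ is the unique root in $(0,1)$ of $12a^6+46a^4+19a^2-2=0$ (uniqueness because $12u^3+46u^2+19u-2$ is strictly increasing for $u=a^2>0$). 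Since $U$ is a nonzero constant along $c_0$, the entries $\int_{t_0}^{t_1}U\theta_i\,dt$ are elementary, and expanding the determinant produces the closed forms for $D(t_0,t_1)$ recorded above.

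Then I would run the sign analysis case by case. For $a\in(0,a_0)$ one has $D(t_0,t_1)=4\frac{b_3}{b_1}U\sin\frac{b_1\tau}{2}\bigl(\sin\frac{b_1\tau}{2}-\frac{b_1\tau}{2}\cos\frac{b_1\tau}{2}\bigr)$ with $\tau:=t_1-t_0$; since $b_1,b_3,U\neq 0$ and $\sin x-x\cos x>0$ for $x\in(0,\pi)$, the smallest positive $\tau$ with $D=0$ is $\tau=2\pi/b_1$. The decisive point is $b_1<1$: one checks the factorization $\kappa(a)-1=-\frac{6a^6+45a^4+24a^2}{(3a^2+2)(2a^2+1)(1-a^2)}<0$, so $0<\kappa(a)<1$ and $\tau=2\pi/b_1>2\pi$, putting the first conjugate point outside the parameter range of $c_0$. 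At $a=a_0$ one gets $D(t_0,t_1)=\frac{1}{6}b_4U\tau^2$ with $b_4<0<U$, hence $D<0$ for all $\tau>0$. For $a\in(a_0,1)$ the numerator of $\kappa$ is positive while $1-a^2>0$, so $b_3>0$, $b_2>0$, $U>0$, the factor $\sinh\frac{b_2\tau}{2}>0$, and the bracket keeps a fixed nonzero sign for $\tau>0$; hence $D(t_0,t_1)\neq 0$ for $\tau>0$. In every case there is no $c$ conjugate to $t_0$ on $(t_0,t_0+2\pi]$, which is the assertion.

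The main obstacle is not conceptual but computational: carrying out the reduction of $\Psi(w)+\mu U=0$ to the scalar equation, evaluating $\int_{t_0}^{t_1}U\theta_i\,dt$ and the $3\times 3$ determinant cleanly in each regime, and — the only genuinely non-mechanical step — establishing $b_1<1$ through the identity $\kappa(a)-1=-(6a^6+45a^4+24a^2)/\bigl((3a^2+2)(2a^2+1)(1-a^2)\bigr)$, since it is precisely this bound that forces the first zero of $D(t_0,t_1)$ to lie beyond the arclength interval $[0,2\pi)$ of the circle.
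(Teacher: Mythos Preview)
Your proposal is correct and follows exactly the paper's approach: reduce the Jacobi equation along $c_0$ to a constant-coefficient ODE, characterize conjugate points by the vanishing of the $3\times 3$ determinant $D(t_0,t_1)$, and carry out the same three-case sign analysis based on the sign of $\kappa(a)$. Your explicit verification of $b_1<1$ via the factorization $\kappa(a)-1=-\dfrac{6a^6+45a^4+24a^2}{(3a^2+2)(2a^2+1)(1-a^2)}$ actually fills in a step that the paper asserts without proof.
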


\section{\textbf{The conclusion and a conjecture}}

\begin{theorem}
Let $c_0$ be the circle centered at the origin in $B^2(1)$, then $c_0$ is a proper strong maximum of the isoperimetric problem. 
\end{theorem}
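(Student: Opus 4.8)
The proof is essentially an assembly of the preceding sections through the sufficiency theorem of Hestenes (Theorem 2.4 in the excerpt). The plan is to verify, one by one, the five hypotheses of that theorem for the arc $c_0 = (a\cos t, a\sin t)$, $t\in[0,2\pi]$, relative to the functional $J = A_{BH} + \lambda L$, but with the roles of maximum and minimum interchanged. Since we want $c_0$ to be a proper strong \emph{maximum} of $A_{BH}$ subject to $L = l$ fixed, we apply Theorem 2.4 to $-A_{BH}$ subject to the same constraint; equivalently we keep $J = A_{BH} + \lambda L$ but reverse all the relevant inequalities, using that the multiplier $\lambda_0 = -\frac{2(1-a^2)^{5/2}a}{6a^4+7a^2+2}$ is negative.

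First I would recall that by the theorem in Section 4, $c_0$ is an isoperimetric extremal of $J$ with $\lambda = \lambda_0 < 0$ (hypothesis (1)), and by the normality theorem of Section 4, $P_1 = \frac{(1+2a^2)}{(1-a^2)^{5/2}}\cos t$ and $P_2 = \frac{(1+2a^2)}{(1-a^2)^{5/2}}\sin t$ are not identically zero, so $c_0$ is normal (hypothesis (2)). Next, by the Weierstrass theorem of Section 5, for $(x,\dot x)$ near $c_0$ the $E$-function satisfies $E(x,\dot x,u) < 0$ for all admissible $u \neq k\dot x$ ($k>0$); this is precisely the strict Weierstrass condition for $-A_{BH}$ (i.e. for the maximization problem), giving hypothesis (3). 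Then, by the conjugate-point theorem of Section 6, there are no points conjugate to $t=0$ along $c_0$ in $(0,2\pi)$, so by the Jacobi/second-variation criterion stated after Theorem 2.5 the second variation $J''(c_0,y)$ is of one fixed sign on non-null admissible variations vanishing at the endpoints and satisfying the linearized constraint $\int_0^{2\pi}(g_{x_i}y_i + g_{\dot x_i}\dot y_i)\,dt = 0$; combined with the sign of $\lambda_0$ and the negativity of the Weierstrass function this sign is negative, which is hypothesis (4) for the maximization problem. Finally, hypothesis (5), the strengthened Legendre condition $\sum_{i,j}h_{\dot x_i\dot x_j}y_iy_j < 0$ for $y \neq k\dot x_0$, follows from the negativity of $E$ (it is the infinitesimal version of the Weierstrass condition, obtained by taking $u = \dot x + \varepsilon y$ and letting $\varepsilon \to 0$), again using $\lambda_0 < 0$.

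Assembling these five verified hypotheses into the (sign-reversed) statement of Theorem 2.4 yields that $c_0$ is a proper strong maximum of $A_{BH}$ among admissible simple closed curves with fixed length $l$, which is the assertion. I would state the final argument briefly: apply Theorem 2.4 to $\tilde{J} = -A_{BH} + \tilde\lambda L$ with $\tilde\lambda = -\lambda_0 > 0$; conditions (1) and (2) are unchanged, conditions (3), (4), (5) transform the strict inequalities $E<0$, $J''<0$, $\sum h_{\dot x_i\dot x_j}y_iy_j<0$ into the required $\tilde E>0$, $\tilde J''>0$, $\sum \tilde h_{\dot x_i\dot x_j}y_iy_j>0$ because all of $h$'s variational data pick up the overall sign flip, and the conclusion ``$c_0$ is a proper strong minimum of $\tilde J$'' reads ``$c_0$ is a proper strong maximum of $A_{BH}$ subject to $L=l$''.

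The main obstacle is not any single computation — all the heavy lifting has been done in Sections 4--6 — but rather the bookkeeping of signs: one must be scrupulous that reversing the optimization direction simultaneously and consistently flips the Weierstrass inequality, the second-variation inequality, and the Legendre inequality, and that the negativity of $\lambda_0$ established in Section 4 is exactly what makes these three reversed inequalities hold as stated in Sections 5 and 6. A secondary subtlety is checking that the class of admissible variations in hypothesis (4) (those satisfying the linearized length constraint) coincides with the class over which Section 6 establishes the absence of conjugate points — this is guaranteed by the integral side-condition $\int_0^{2\pi} U w\,dt = 0$ built into the Jacobi problem there, together with normality, so that the definiteness of the second variation on that restricted class is genuinely equivalent to the no-conjugate-point conclusion.
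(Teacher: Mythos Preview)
Your assembly of hypotheses (1)--(4) via Sections~4--6 and Theorem~2.4 matches the paper's proof exactly, including the careful handling of the max/min sign flip through $\lambda_0<0$. The only divergence is hypothesis~(5): the paper verifies the strengthened Legendre condition by a direct computation along $c_0$, obtaining
\[
\sum_{i,j=1}^2 h_{\dot x_i\dot x_j}y_iy_j=\frac{\lambda_0(2a^2+1)}{a(1-a^2)^{5/2}}\bigl(\cos(t)\,y_1+\sin(t)\,y_2\bigr)^2,
\]
which is manifestly negative for $y\neq k\dot c_0(t)$ since $\lambda_0<0$.

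Your alternative route---deducing (5) from (3) by expanding $E(x,\dot x,\dot x+\varepsilon y)$ to second order in $\varepsilon$---is classical, but as stated it has a small gap: the Taylor argument gives only $\sum h_{\dot x_i\dot x_j}y_iy_j\le 0$, not strict inequality. In a parametric $2$-dimensional problem the homogeneity relation $h_{\dot x_i\dot x_j}\dot x_j=0$ forces the Hessian to have rank at most one, so $\sum h_{\dot x_i\dot x_j}y_iy_j=H_1(\dot x_2 y_1-\dot x_1 y_2)^2$ with $H_1=h_{\dot x_1\dot x_1}/\dot x_2^2$; strict Weierstrass does not by itself exclude $H_1=0$ (higher-order terms could still make $E<0$). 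To close the gap you would need either to invoke the explicit form of $E$ from Section~5 to see that its quadratic part is nondegenerate, or---simpler and exactly what the paper does---just compute $H_1$ (equivalently $h_{\dot x_1\dot x_1}$) along $c_0$ directly. The paper's explicit evaluation is only a couple of lines and avoids the issue entirely.
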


\begin{proof} According to the Theorem 2.3  and the previous results,  there is only one sufficient condition left to be verified: 
  $$\sum_{i,j=1}^2h_{\dot{x}_i\dot{x}_j}y_iy_j<0$$
holds along $c_0$ for all $y\neq k\dot{c}_0(t).$ 
Plugging $c_0=(a\cos(t), a\sin(t))$ into $\sum_{i,j=1}^2h_{\dot{x}_i\dot{x}_j}y_iy_j$ will obtain
\[\sum_{i,j=1}^2h_{\dot{x}_i\dot{x}_j}y_iy_j=\frac{\lambda_0(2a^2+1)}{a(1-a^2)^{\frac{5}{2}}}\big(\cos (t)y_1+\sin(t)y_2\big)^2.\]
Since $\lambda_0<0$, the inequality  $\sum_{i,j=1}^2h_{\dot{x}_i\dot{x}_j}y_iy_j<0$ hods for all $y\neq k\dot{c}_0(t)$. 
\end{proof}

Furthermore, we conjecture that $c_0$ achieves the global maximum of the isoperimetric problem in $(F_B, B^2(1))$, when using the Busemann-Hausdorff area. 
\begin{conjecture}
Let $c_0$ be the circle centered at the origin in $(F_B, B^2(1))$, then $c_0$ encloses the maximal Busemann-Hausdorff area among all the simple closed curves which are smooth and have the fixed length. 
\end{conjecture}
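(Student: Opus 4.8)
The conjecture asserts that the \emph{proper} (i.e. local) maximum just obtained is in fact the \emph{global} maximum, so the plan is to upgrade the variational analysis from local to global. I would organize the argument in three stages: first produce an area maximizer by the direct method, then cut it down to a highly symmetric competitor, and finally settle the resulting one-variable problem. The key observation that makes the direct method feasible is that along the radial direction $F_B$ equals $(1-|x|)^{-2}$, so the boundary circle $\partial B^2(1)$ lies at infinite Finsler distance; consequently a simple closed curve of length at most $l$ whose enclosed Busemann--Hausdorff area is bounded below must stay in a fixed compact set $\{|x|\le 1-\delta\}$ (a curve reaching close to $\partial B^2(1)$ is long, while the supremum of the area is positive because the circles $c_0$ already realize positive area). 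On this compact set the length functional $\int g\,dt$ is convex in the velocities, hence lower semicontinuous, and the weighted area $\int f\,dt$ is continuous under the relevant convergence; the direct method then yields a maximizer, which a standard cut-and-paste removal of self-intersections renders simple, and the ellipticity of the Euler--Lagrange system --- precisely the definiteness of $(h_{\dot x_i\dot x_j})$, verified along $c_0$ in the proof of the local theorem with the sign $\lambda_0<0$ --- renders smooth.

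Next I would exploit that both $\sigma_{BH}(r)=(1+\tfrac32 r^2)^{-1}$ and $F_B$ are invariant under rotations about the origin. Circular symmetrization of the enclosed region, followed by a monotone rearrangement in the radial variable (legitimate because $\sigma_{BH}$ is radial and decreasing), replaces the region by one of the form $\{r<R(\theta)\}$ with the same Busemann--Hausdorff area and no larger Finsler length; hence it suffices to maximize over star-shaped competitors. For a curve $(R(\theta)\cos\theta,R(\theta)\sin\theta)$ the area and length become $\tfrac13\int_0^{2\pi}\ln(3R^2+2)\,d\theta$ and $\int_0^{2\pi} g(R,R')\,d\theta$ with $g(R,R')=\frac{R^2(1-R^2)+(1+R^2)(R')^2}{(1-R^2)^2\sqrt{R^2(1-R^2)+(R')^2}}$, and the centered circle corresponds to $R\equiv a$, an isoperimetric extremal of $J=A+\lambda L$ with multiplier $\lambda_0=\lambda_0(a)<0$.

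It then remains to show that $R\equiv a$ is the \emph{global} maximizer of this one-variable problem, and I see two routes. The first is a phase-plane analysis: by the first integral $\dot r\,h_{\dot r}-h=C_1$ the Euler--Lagrange orbits foliate the $(r,\dot r)$-plane into closed loops about the equilibria, so a nonconstant $2\pi$-periodic extremal would have to traverse an integer number of such loops over the angular period $2\pi$ and in addition satisfy the linearized side-condition $\int_0^{2\pi}Uw\,d\theta=0$; using the period function of these level sets together with the explicit small-oscillation frequency $b_1=\sqrt{\frac{12a^6+46a^4+19a^2-2}{(3a^2+2)(2a^2+1)(a^2-1)}}<1$ obtained in Section 6 (so that small oscillations have period exceeding $2\pi$), one would exclude every nonconstant $2\pi$-periodic extremal, leaving only $R\equiv\text{const}$, and the length condition then selects $a$ uniquely. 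The second route is field-theoretic: extend Hestenes' local sufficiency by embedding $c_0$ in a field of extremals --- the concentric circles $(\rho\cos t,\rho\sin t)$, $\rho\in(0,1)$, foliate $B^2(1)\setminus\{0\}$ --- and invoke Hilbert's invariant integral together with the negativity of the Weierstrass E-function established in Section 5 to conclude $A[C]\le A[c_0]$ for every competitor $C$ lying in the field with $L[C]=L[c_0]$, the curves through or near the origin being handled by approximation.

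\textbf{The main obstacle is precisely this last stage.} Both routes collide with a genuine degeneracy: the multiplier $\lambda_0(a)=-\tfrac{2a(1-a^2)^{5/2}}{6a^4+7a^2+2}$ vanishes at both $a=0$ and $a=1$ and is not monotone, so the concentric circles cannot be organized into a field of extremals of a single $J_\lambda$; and the radial coefficient $\tfrac{12a^6+46a^4+19a^2-2}{(3a^2+2)(2a^2+1)(a^2-1)}$ changes sign at $a=a_0$, so the reduced Lagrangian fails to be globally concave --- this is exactly the phenomenon that confines the present paper to a local statement. Overcoming it seems to require either a delicate monotonicity analysis of the period function of equation (\ref{eq3}) to forbid nonconstant closed extremals for every $a$, or a global convexity-type inequality comparing the area density $\ln(3R^2+2)$ with the Finsler length density $g(R,R')$ that genuinely uses the $R'$-dependence (the naive Jensen estimate that drops it is false, as thin radial spikes show). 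A secondary obstacle is to make the length-nonincrease under circular symmetrization rigorous for the non-Riemannian functional $\int g\,dt$: this is classical on Riemannian surfaces of revolution but needs a careful argument here using the convexity and spherical symmetry of $F_B$.
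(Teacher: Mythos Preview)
The statement you are attempting is labelled \emph{Conjecture} in the paper and the paper gives no proof of it; the author only proves the local (proper strong) maximum result in Section~7 and then explicitly records the global statement as open. So there is no ``paper's own proof'' to compare against, and your write-up is, appropriately, a research outline rather than a proof.

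As an outline it is sensible and you are candid about where it breaks. Two remarks on the genuine gaps. First, the field-theoretic route cannot work as stated: Hilbert's invariant integral requires a field of extremals of a \emph{single} functional $J_\lambda$, but the concentric circles are extremals of $J_{\lambda_0(a)}$ with $\lambda_0(a)=-\tfrac{2a(1-a^2)^{5/2}}{6a^4+7a^2+2}$ varying with $a$ (and non-monotonically, as you note), so the Weierstrass inequality of Section~5, which is proved for one fixed $\lambda$, does not transfer to competitors that cross several circles. You identify this, but it is fatal to that route rather than merely an obstacle. Second, the symmetrization step is shakier than you indicate: circular (Steiner-in-polar) symmetrization yields a region of the form $\{|\theta|<\Theta(r)\}$, not a star-shaped one $\{r<R(\theta)\}$, and the subsequent ``monotone rearrangement in the radial variable'' you invoke would have to move mass across circles of different $r$, which changes the Busemann--Hausdorff area because $\sigma_{BH}(r)=(1+\tfrac32 r^2)^{-1}$ is not constant. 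Even the perimeter-decreasing part for the non-Riemannian length $\int g\,dt$ is not available off the shelf; the closed-curve length functional here is the Klein metric plus a non-Riemannian correction $\tfrac{(x\cdot\dot x)^2}{(1-|x|^2)^2A}$, and it is this second term that must be controlled.

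In short: your proposal does not prove the conjecture, you say so yourself, and the paper does not either. The honest status is that Stage~3 (ruling out nonconstant $2\pi$-periodic extremals of the reduced problem, or an equivalent global inequality) is the heart of the matter and remains open.
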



{\small DEPARTMENT OF MATHEMATICS, EAST CHINA NORMAL UNIVERSITY }

{\small SHANGHAI 200062, CHINA}\newline
{\small E-mail address: lfzhou@math.ecnu.edu.cn}

\begin{thebibliography}{99}

\bibitem{Bl}
V. Bl\aa sj\"{o}, \emph{The Evolution of the Isoperimetric Problem}, Amer. Math. Monthly. 112: 526-566.

\bibitem{Bo}
O. Bolza, \emph{Lectures on the calculus of variations}. 2nd ed. Chelsea Publishing Co., New York 1961. 

\bibitem{Bu}
H. Busemann, \emph{The isoperimetric problem for Minkowski area}, Amer. J. Math. 71, (1949). 743-762.

\bibitem{Ch}
I. Chavel, \emph{Isoperimetric Inequality: Differential Geometry and Analytic Perspectives}, Cambridge Tracts in Mathematics 145, Cambridge University Press, Cambridge, 2001.

\bibitem{CS}
X. Cheng and Z. Shen, \emph{A class of Finsler metrics with isotropic S-curvature}, Israel J. Math. 169 (2009), 317-340.

\bibitem{He}
M. R. Hestenes, \emph{A sufficiency proof for isoperimetric problems in the calculus of variations}, Bull. Amer. Math. Soc. 44 (1938), no. 10, 662-667. 

\bibitem{Hu}
A. Hurwitz, \emph{Sur quelques applications géométriques des séries de Fourier}, Annales Scientifiques de l'\'Ecole Normale Supérieure 19 (1902) 357-408.

\bibitem{L}
B. Li, \emph{On the classification of projectively flat Finsler metrics with constant flag curvature}, Adv. Math. 257 (2014), 266-284. 

\bibitem{MZ}
X. Mo, H. Zhu, \emph{On a class of projectively flat Finsler metrics of negative constant fall curvature}, Internat. J. Math. 23 (2012), no. 8, 1250084, 14 pp.

\bibitem{Sc}
E. Schmidt, \emph{\"{U}ber das isoperimetrische Problem im Raum von n Dimensionen}, Math. Z. 44 (1939) 689-788.

\bibitem{St}
J. Steiner, \emph{Einfache Beweise der isoperimetrischen Haupts\"atze}, J. Reine Angew. Math. 18 (1838), 281-296. 

\bibitem{Zh}
L. Zhou, \emph{Projective spherically symmetric Finsler metrics with constant flag curvature in $R^n$}, Geom. Dedicata 158 (2012), 353-364.


\end{thebibliography}
\end{document}